\newtheorem{thm}{Theorem}[section]
\newtheorem*{thm*}{Theorem}
\newtheorem{lem}[thm]{Lemma}
\newtheorem{fact}[thm]{Fact}
\newtheorem{prop}[thm]{Proposition}
\newtheorem*{prop*}{Proposition}
\newtheorem{cor}[thm]{Corollary}
\newtheorem*{cor*}{Corollary}
\theoremstyle{definition}
\newtheorem{defn}[thm]{Definition}
\newtheorem*{defn*}{Definition}
\newtheorem{remark}[thm]{Remark}
\newtheorem*{question*}{Question}
\newtheorem*{Pquestion*}{Popa's question}
\newtheorem*{conv*}{Convention}
\newcommand{\dminus}{ 
\buildrel\textstyle\ .\over{\hbox{ 
\vrule height3pt depth0pt width0pt}{\smash-} 
}}
\def\bb{\mathbb}
\def\bb{\mathbb}
\def\cal{\mathcal}
\def\u{\mathsf 1}
\newcommand{\cstar}{$\mathrm{C}^*$}
\def\dotminussym#1#2{%
  \setbox0=\hbox{$\m@th#1-$}%
  \kern.5\wd0%
  \hbox to 0pt{\hss\hbox{$\m@th#1-$}\hss}%
  \raise.6\ht0\hbox to 0pt{\hss$\m@th#1.$\hss}%
  \kern.5\wd0}
\newcommand{\dotminus}{\mathbin{\mathpalette\dotminussym{}}}
\DeclareMathOperator{\tr}{tr}
\def \Th{\operatorname{Th}}
\def \R{\mathcal R}
\def \u{\mathcal U}
\def \val{\operatorname{val}}
\def \pval{\operatorname{s-val}}
\newcommand{\mip}{\operatorname{MIP}}
\newcommand{\cqs}{C_q}
\newcommand{\cqa}{C_{qa}}
\newcommand{\cqc}{C_{qc}}
\begin{document}


\title{The universal theory of the hyperfinite II$_1$ factor is not computable}
\author{Isaac Goldbring and Bradd Hart}

\address{Department of Mathematics\\University of California, Irvine, 340 Rowland Hall (Bldg.\# 400),
Irvine, CA 92697-3875}
\email{isaac@math.uci.edu}
\urladdr{http://www.math.uci.edu/~isaac}

\address{Department of Mathematics and Statistics, McMaster University, 1280 Main St., Hamilton ON, Canada L8S 4K1}
\email{hartb@mcmaster.ca}
\urladdr{http://ms.mcmaster.ca/~bradd/}

\begin{abstract}
    We show that the universal theory of the hyperfinite II$_1$ factor is not computable.  The proof uses the recent result that MIP*=RE.  Combined with an earlier observation of the authors, this yields a proof that the Connes Embedding Problem has a negative solution that avoids the equivalences with Kirchberg's QWEP Conjecture and Tsirelson's Problem.
\end{abstract}

\maketitle

\section{Introduction}

In this note, $\cal R$ denotes the hyperfinite II$_1$ factor.  The main result of this note is the following result:

\begin{thm*}
The universal theory of $\R$ is not computable.
\end{thm*}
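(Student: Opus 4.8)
The plan is to derive the statement from the inclusion $\mathrm{RE}\subseteq\mathrm{MIP}^*$ together with the standard correspondence between nonlocal games and tracial-von-Neumann-algebra strategies. Before invoking anything deep I would record the easy half: $\Th_\forall(\R)$ is always \emph{lower} semicomputable, since the value in $\R$ of a universal sentence $\sup_{\bar x}\varphi(\bar x)$ equals $\sup_n\,\sup_{\bar x\in M_{2^n}(\bC)}\varphi(\bar x)$, and the inner suprema form a uniformly computable increasing sequence of reals (optimizing a $*$-polynomial trace-expression over a matrix algebra is an effective semialgebraic optimization). So ``not computable'' reduces to showing that $\Th_\forall(\R)$ is not \emph{upper} semicomputable, i.e.\ that $\{(\sigma,q):\sigma^{\R}<q\}$ is not computably enumerable; in fact the argument below will show the stronger statement that one cannot compute the values $\sigma^{\R}$ even under a promised gap.

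The core is a computable reduction of an $\mathrm{RE}$-complete problem to the computation of values of universal sentences in $\R$. By $\mathrm{MIP}^*=\mathrm{RE}$, fix the halting set $H$ and a computable map $x\mapsto G_x$ to (synchronous) nonlocal games with
\[
x\in H \ \Longrightarrow\ \omega^{qa}(G_x)=1,
\qquad
x\notin H \ \Longrightarrow\ \omega^{qa}(G_x)\le\tfrac12,
\]
where $\omega^{qa}$ is the game value over finite-dimensional strategies, equivalently over strategies in $\R^{\cU}$. Next I would manufacture, uniformly computably in a synchronous game $G$ with projection data $\{e^x_a\}$, a quantifier-free $\R$-formula $\psi_G(\bar x)$ obtained from the winning-probability expression $\sum_{x,y}\pi(x,y)\sum_{(a,b)\ \mathrm{win}}\tr(e^x_a e^y_b)$ by subtracting a large (but computable-from-$G$) multiple of $\|\cdot\|_2$-penalty terms forcing the $e^x_a$ to be projections with $\sum_a e^x_a=1$, arranged so that
\[
\bigl(\sup_{\bar x}\psi_G(\bar x)\bigr)^{\R}\ =\ \omega^{qa}(G).
\]
The inequality ``$\ge$'' here is immediate, since honest strategies incur no penalty and embed into $\R^{\cU}$; ``$\le$'' is a perturbation estimate: a tuple with small penalty can be rounded, at $\|\cdot\|_2$-cost controlled by the penalty, to an honest strategy of at least as large value, so every $\psi_G(\bar x)$ is bounded by $\omega^{qa}(G)$. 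The identification of the supremum over $\R$ (equivalently over $\R^{\cU}$) with the $\omega^{qa}$-value uses the strong density of $\bigcup_n M_{2^n}(\bC)$ in $\R$ and the preservation of universal-sentence values under ultrapowers. Setting $\sigma_G:=\sup_{\bar x}\psi_G(\bar x)$, the map $G\mapsto\sigma_G$ is computable and each $\sigma_G$ is a universal sentence.

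Composing the two steps, $x\mapsto\sigma_{G_x}$ is a computable map into universal sentences with $\sigma_{G_x}^{\R}=1$ for $x\in H$ and $\sigma_{G_x}^{\R}\le\tfrac12$ for $x\notin H$. If $\Th_\forall(\R)$ were computable then, on input $x$, one could compute $\sigma_{G_x}^{\R}$ to within $\tfrac18$; outputting ``halts'' when the approximation exceeds $\tfrac34$ and ``does not halt'' otherwise then decides $H$, contradicting its $\mathrm{RE}$-completeness. Hence $\Th_\forall(\R)$ is not computable, and a fortiori neither is $\Th(\R)$. I expect the main obstacle to lie exactly in the translation step: not in its idea, which is the by-now-routine game/$\mathrm{C}^*$-algebra dictionary, but in the bookkeeping needed to make $G\mapsto\sigma_G$ genuinely computable (uniform control of the arity, the penalty weights, and the modulus in the rounding estimate) and to pin the value down to the exact identity $\sigma_G^{\R}=\omega^{qa}(G)$, rather than the weaker ``$\sigma_G^{\R}=0$ iff $\omega^{qa}(G)=1$''. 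It is this exact identity, combined with the $\{1\}$-versus-$[0,\tfrac12]$ gap supplied by $\mathrm{MIP}^*=\mathrm{RE}$, that upgrades the conclusion from ``$\Th_\forall(\R)$ is not the maximal universal theory'' (the failure of the Connes Embedding Problem) to outright non-computability.
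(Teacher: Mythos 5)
The main gap is the claimed exact identity $\bigl(\sup_{\bar x}\psi_G(\bar x)\bigr)^{\R}=\omega^{qa}(G)$, and in particular your ``$\ge$'' direction. A tuple of PVMs in $\R$ (or $\R^\u$) produces a correlation of the special tracial form $p(i,j|v,w)=\tau(e_{v,i}e_{w,j})$, and by the Kim--Paulsen--Schafhauser characterization these are exactly the \emph{synchronous} elements of $C_{qa}$. So your supremum over $\R$ computes the synchronous value $\pval^*(G)$, not the full value $\val^*(G)=\omega^{qa}(G)$; in general $\pval^*(G)\leq\val^*(G)$ and the inequality can be strict. Your argument that ``honest strategies incur no penalty and embed into $\R^{\cU}$'' conflates general bipartite tensor-product strategies $(A^x_a\otimes B^y_b,\xi)$ with single-family tracial strategies: a generic finite-dimensional strategy does not give projections in $\R$ whose trace moments reproduce $p(a,b|x,y)=\langle A^x_a\otimes B^y_b\,\xi,\xi\rangle$, so $\sup_{\bar x}\psi_G$ need not dominate $\omega^{qa}(G)$. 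Consequently the completeness case of your reduction ($x\in H\Rightarrow\sigma_{G_x}^{\R}=1$) is not justified as written.

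The repair is exactly the point the paper leans on: the games produced by MIP${}^*$=RE can be taken \emph{proper and synchronous} (an artifact of the construction), and for a proper synchronous game $\val^*(G)=1$ forces any optimal correlation to be synchronous, so $\val^*(G)=1$ if and only if $\pval^*(G)=1$. With $\sigma_{G_x}^{\R}=\pval^*(G_x)$ one then still has the gap $\{1\}$ versus $[0,\tfrac12]$ (the soundness case is fine since $\pval^*\leq\val^*\leq\tfrac12$), and your halting-problem argument with a $\tfrac18$-approximation goes through; note that the exact identity you were aiming for is not needed, only this gap. Your penalty-term formulation is also fixable but needs the same stability input the paper extracts from \cite[Lemma 3.5]{KPS}: the rounding cost is controlled by a (computable) modulus, not necessarily linearly in the penalty, so one should either argue with an approximate identity up to a computable error (which suffices given the gap) or quantify over the definable set of PVM tuples via the $\dminus\, d(x,X)$ device as in the paper. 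Your lower-semicomputability remark is correct and matches the paper's use of a computable presentation of $\R$.
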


In the next section, we will define this statement precisely.  Roughly speaking, this says that there is no algorithm which takes as inputs a universal sentence and rational tolerance $\epsilon>0$ and produces an interval of radius less than $\epsilon$ containing the truth value of the sentence in $\cal R$.

In Section 4, we offer an alternative formulation which shows that our main result is really a result about matrices and traces.  Given positive integers $n$ and $d$, we fix variables $x_1,\ldots,x_n$ and enumerate all *-monomials in the variables $x_1,\ldots,x_n$ of total degree at most $d$, $m_1,\ldots,m_L$. (Of course, $L=L(n,d)$ depends on both $n$ and $d$.)  We consider the map $\mu_{n,d}:\R_1^n \rightarrow D^L$ given by $\mu_{n,d}(\vec a)=(\tau(m_i(\vec a)) \ : \ i=1,\ldots,L)$.  (Here, $D$ is the complex unit disk.)


We let $X(n,d)$ denote the range of $\mu_{n,d}$ and $X(n,d,p)$ be the image of the unit ball of $M_p(\mathbb C)$ under $\mu_{n,d}$.  Notice that $\bigcup_{p\in \bb N} X(n,d,p)$ is dense in $X(n,d)$.  Thus, there is a function $F:\mathbb N^3\to \mathbb N$ such that $X(n,d,F(n,d,k))$ is $\frac{1}{k}$-dense in $X(n,d)$ for all $n,d,k\in \mathbb N$.  The fact that the universal theory of $\R$ is not computable is equivalent to the fact that no such $F$ is a computable function.

The proof of our main theorem appears in Section 3 and uses the recent result from \cite{MIP*} that shows that the complexity classes MIP* and RE are the same.  As noted in \cite{MIP*}, that result can be used to show that the \textbf{Connes Embedding Problem} (CEP) has a negative solution.  Recall that CEP asks whether or not every II$_1$ factor embeds into an ultrapower of $\cal R$.  The argument presented in \cite{MIP*} that MIP*=RE implies the failure of CEP is complicated.  First, one shows that MIP*=RE implies that \textbf{Tsirelson's Problem} has a negative solution; this fact first appears in \cite{FNT}.  Next, one uses that the failure of Tsirelson's Problem implies that Kirchberg's \textbf{QWEP Conjecture} has a negative solution; this fact is due to Fritz and Junge et al \cite{Fr,Junge}.  (That Tsirelson's Problem is actually equivalent to the QWEP conjecture is due to Ozawa \cite{Oz}.)  Finally, one uses that the failure of the QWEP Conjecture implies the failure of CEP, which appears in \cite{K}.

The current authors showed in \cite{GH} that a positive solution to CEP implies that the universal theory of $\cal R$ is computable.  The proof is essentially an immediate application of the \textbf{Completeness Theorem} for continuous first order logic \cite{BYP} and the fact that the theory of II$_1$ factors has a recursively enumerable axiomatization.  Thus, the main theorem here yields a proof that MIP*=RE implies that CEP has a negative solution using just basic facts from continuous logic.

In Section 5, we offer a general perspective on embedding problems and point out how our techniques give a stronger refutation of the CEP in the spirit of the G\"odel Incompleteness Theorem.  In particular, we prove the following result:

\begin{thm*}
If $T$ is any consistent, recursively axiomatizable extension of the theory of II$_1$ factors, then there is a II$_1$ factor which satisfies $T$ which does not embed into an ultrapower of $\R$.
\end{thm*}

This theorem allows us to prove that therer are ``many'' counterexamples to CEP in a sense we now make precise.  In \cite{mtoa3}, the authors proved the existence of so-called \textbf{locally univeral} II$_1$ factors, that is, separable II$_1$ factors $M$ such that every separable II$_1$ factor embeds into an ultrapower of $M$.  The negative solution to CEP provided by MIP*$=$RE thus asserts that $\cal R$ is not a locally universal II$_1$ factor.  It is a priori possible that all II$_1$ factors fall into one of two categories:  those that are embeddable into an ultrapower of $\cal R$ and those that are locally universal.  Our previous theorem can be used to show that this is emphatically not the case:

\begin{thm*}
There is a sequence $M_1,M_2,\ldots,$ of separable II$_1$ factors, none of which embed into an ultrapower of $\cal R$, and such that, for all $i<j$, $M_i$ does not embed into an ultrapower of $M_j$.
\end{thm*}

In Section 6, we present some applications of our techniques to a large class of C*-algebras.  When applied to the universal UHF algebra $\mathcal{Q}$, we obtain a G\"odelian-style refutation of the MF problem, which asks whether every stably finite \cstar-algebra embeds into an ultrapower of $\mathcal{Q}$.  On the other hand, when applied to the case of the Jiang-Su algebra $\mathcal{Z}$, we obtain the following purely operator-algebraic consequence, which appears to be new:

\begin{thm*}
There is a stably projectionless \cstar-algebra that does not embed into an ultrapower of the Jiang-Su algebra $\mathcal{Z}$.
\end{thm*}

In the final section, we offer alternative proofs to the negative solutions of Tsirelson's problem and Kirchberg's QWEP conjecture from MIP*=RE, replacing the semidefinite programming argument from \cite{FNT} with a simple application of the Completeness Theorem.


In order to keep this note short, we include very little background information on continuous logic (the material that is truly necessary for our proof appears in the next section) or quantum games.  We refer the reader to \cite{mtfms} for continuous logic or \cite{MTC*A} for an operator algebraic approach; the introduction to \cite{MIP*} contains an excellent guide to the necessary work on quantum games.  A first version of the proof of the main theorem was given in a talk at the Canadian Operator Symposium in May, 2020.  We would like to thank Se-Jin Kim, Vern Paulsen and Chris Schafhauser for pointing out the simplification possible by considering synchronous correlation sets and a special thanks to Thomas Vidick for providing the additional information regarding the role of such correlation sets in the proof in \cite{MIP*}.  We would also like to thank Thomas Sinclair, Aaron Tikuisis, Mikael R\o rdam and Jamie Gabe for enlightening discussions around the MF and other embedding problems, and to Ward Henson for useful comments about the computability-theoretic issues under discussion. 

\section{A little continuous logic}

We fix a countable collection $(u_n)$ of continuous functions $\mathbb R^k\to \mathbb R$ (as $k$ varies) with compact support satisfying the following two conditions:
\begin{enumerate}
    \item For each $k$, the set $\{u_n\ : \ n\in \bb N\}\cap C_c(\bb R^k)$ is dense in $C_c(\bb R^k)$.
    \item There is an algorithm that takes as inputs a computable $f\in C_c(\bb R^k)$ and a rational $\delta>0$ and returns $n$ such that $u_n\in C(\bb R^k)$ and $\|f-u_n\|_\infty<\delta$.
\end{enumerate}

For convenience, we assume that the following functions are amongst the sequence $(u_n)$:
\begin{itemize}
    \item the binary functions $+$ and $\cdot$,
    \item for each $\lambda\in \bb Q$, the unary function $x\mapsto \lambda x$,
    \item the binary function $\dminus$ given by $x\dminus y:=\max(x-y,0)$, and 
    \item the unary functions $x\mapsto 0$, $x\mapsto 1$, and $x\mapsto \frac{x}{2}$.
\end{itemize}

We now fix a computable continuous language $L$.  (In the next section, $L$ will be the langauge of tracial von Neumann algebras.)  We call an $L$-formula \textbf{restricted} if it only uses functions appearing in the sequence $u_n$ as connectives.  We fix an enumeration $(\varphi_m)$ of the restricted $L$-formulae.  We also call an $L$-formula $\textbf{computable}$ if it only uses computable connectives. The following is immediate from the definitions:

\begin{lem}
There is an algorithm such that takes as inputs a computable $L$-formula $\varphi(x)$ and rational $\delta>0$ and returns $n$ such that $\varphi_n(x)$ has the same arity as $\varphi$ and $\|\varphi-\varphi_n\|<\delta$, the distance being the usual logical distance between $L$-formulae.  Moreover, if $\varphi$ is quantifier-free, then so are the $\varphi_n$.
\end{lem}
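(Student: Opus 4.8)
The plan is to prove the lemma by recursion on the structure of the computable $L$-formula $\varphi$, carefully tracking how the allotted rational tolerance $\delta$ is divided among the pieces of $\varphi$. At each stage the procedure will return a restricted formula having \emph{exactly} the same free variables as the corresponding subformula, so the equal-arity requirement is automatic, and the very last step is just to look up the index of the restricted formula so produced in the fixed effective enumeration $(\varphi_m)$. It is convenient to recall that the logical distance $\|\varphi-\psi\|$ between two $L$-formulas with the same free variables is the supremum of $|\varphi^M(\bar a)-\psi^M(\bar a)|$ over all $L$-structures $M$ and tuples $\bar a$ from $M$, so throughout it suffices to produce the required estimates uniformly in $M$ and $\bar a$.

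The base case and the quantifier case are easy. If $\varphi$ is atomic --- i.e.\ $d(t_1,t_2)$ or $P(t_1,\dots,t_k)$ for an $L$-predicate $P$ and $L$-terms $t_i$ --- then $\varphi$ involves no connectives at all, hence is already restricted (and quantifier-free), and the procedure simply returns its index. If $\varphi=\sup_y\psi$ (the $\inf$ case is identical), I would recurse on $\psi$ with tolerance $\delta$ to obtain a restricted $\chi$ with $\|\psi-\chi\|<\delta$ and output $\sup_y\chi$; since $\sup$ is $1$-Lipschitz this has the same free variables as $\varphi$ and $\|\sup_y\psi-\sup_y\chi\|\le\|\psi-\chi\|<\delta$. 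The quantifier case never occurs when $\varphi$ is quantifier-free, which is exactly why the output remains quantifier-free in that situation.

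The substance is the connective case: $\varphi=u(\psi_1,\dots,\psi_k)$ with $u\colon\mathbb R^k\to\mathbb R$ a computable connective and each $\psi_i$ computable. From the computable presentation of $L$ one first computes bounds $N_i$ with $|\psi_i^M(\bar a)|\le N_i$ in every $L$-structure, and sets $B:=\prod_{i=1}^k[-N_i-1,\,N_i+1]$. Since the connectives $u_n$ have compact support while $u$ need not, I would replace $u$ by a computable $\rho\in C_c(\mathbb R^k)$ agreeing with $u$ on $B$ --- e.g.\ multiply $u$ by a fixed computable bump function that is identically $1$ on $B$. As $u$ is computable it is uniformly continuous on the compact box $B$ with a computable modulus; using it, pick a rational $\delta'>0$ so that displacing a point of $B$ by less than $\delta'$ changes the value of $u$ by less than $\delta/2$. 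Now recurse on each $\psi_i$ with tolerance $\min(\delta',1)$ to get restricted $\chi_i$ with $\|\psi_i-\chi_i\|<\min(\delta',1)$, so that each $\chi_i$ takes values in $[-N_i-1,N_i+1]$, and apply property (2) of the sequence $(u_n)$ to $\rho$ with tolerance $\delta/2$ to obtain $n$ with $u_n\in C(\mathbb R^k)$ and $\|\rho-u_n\|_\infty<\delta/2$. The output is the restricted formula $u_n(\chi_1,\dots,\chi_k)$, and the estimate
\[
\|\varphi-u_n(\chi_1,\dots,\chi_k)\| \le \|u(\psi_1,\dots,\psi_k)-u(\chi_1,\dots,\chi_k)\| + \|\rho-u_n\|_\infty < \frac{\delta}{2}+\frac{\delta}{2}
\]
closes this case: the first term is $<\delta/2$ by the choice of $\delta'$ (all arguments occurring lie in $B$), and for the middle step one uses that $u(\chi_1,\dots,\chi_k)$ and $\rho(\chi_1,\dots,\chi_k)$ define the same predicate because $\rho=u$ on $B$. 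If $\varphi$ is quantifier-free then so are the $\psi_i$, hence the $\chi_i$, hence the output.

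The step I expect to be the only real obstacle is this connective case, and specifically the three effectivity points it bundles together: (a) that the bounds $N_i$, and hence the compact box $B$, are computable from the data of $L$; (b) that one can effectively manufacture a compactly supported function $\rho$ agreeing with the given computable connective $u$ on $B$; and (c) that effectivity of a modulus of uniform continuity for $u$ on $B$ is what lets the algorithm decide, before recursing, how accurately the subformulas $\psi_i$ need to be approximated. Granting these, the rest --- additivity of the error budget across the recursion, preservation of free variables, and passing effectively between restricted formulas and their indices in $(\varphi_m)$ --- is routine bookkeeping from the definitions and from properties (1)--(2) of $(u_n)$.
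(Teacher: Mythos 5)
Your induction on the structure of $\varphi$ --- budgeting the error via a computable modulus of uniform continuity for each computable connective on a computable compact box, truncating to compact support so that property (2) of $(u_n)$ applies, and noting that quantifiers are $1$-Lipschitz --- is correct and is precisely the routine argument the paper has in mind when it declares the lemma ``immediate from the definitions.'' No substantive difference from the paper's (unstated) proof.
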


Given an $L$-structure $M$, a nonnegative $L$-formula $\varphi(x)$ is called an \textbf{almost-near formula for $M$} if, for any $\epsilon>0$, there is $\delta=\delta(\epsilon)>0$ so that, for any $a\in M$, if $\varphi^M(a)<\delta(\epsilon)$, then there is $b\in M$ such that $\varphi^M(b)=0$ and $d(a,b)\leq \epsilon$.  In this case, we refer to the function $\delta(\epsilon)$ as a \textbf{modulus} for $\varphi$.  If $\varphi$ is an almost-near formula for $M$, then we refer to the zeroset of $\varphi^M$ in $M$, denoted $Z(\varphi^M)$, as the \textbf{definable set} corresponding to $\varphi$. 

The utility of definable sets is that one can quantify over them in a first-order way.  In order to explain explicitly how we use this fact, we note that, given an almost-near formula $\varphi(x)$, \cite[Remark 2.12]{mtfms} establishes the existence of a nondecreasing, continuous function $\alpha:[0,\infty)\to [0,\infty)$ with $\alpha(0)=0$ and with the property that, for any $a\in M$, we have $d(a,Z(\varphi^M))\leq \alpha(\varphi^M(a))$; moreover, the function $\alpha$ depends only on the modulus $\delta(\epsilon)$ for $\varphi$.  As shown in the proof of \cite[Proposition 9.19]{mtfms}, it follows that $$d(a,Z(\varphi^M))=(\inf_{x}(\alpha(\varphi(x))+d(a,x)))^M. \quad \quad (\dagger)$$

The import of $(\dagger)$ is that the formula on the right-hand side of $(\dagger)$ is an actual formula of continuous logic.  We note that the proof appearing in \cite[Remark 2.12]{mtfms} shows that if the modulus $\delta$ is computable (when restricted to rational $\epsilon$), then the corresponding $\alpha$ is also a computable function.  We summarize this discussion as follows:

\begin{prop}\label{approx}
There is an algorithm such that takes as inputs a computable almost-near formula $\varphi(x)$ for $M$ that has a computable modulus and a rational $\eta>0$ and returns $n\in \bb N$ so that, for all $a\in M$, we have $|d(a,Z(\varphi^M))-\varphi_n(a)^M|<\eta$.  Moreover, if $\varphi$ is quantifier-free, then each $\varphi_n$ is existential.
\end{prop}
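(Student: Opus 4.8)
The plan is to read the formulas $\varphi_n$ directly off the identity $(\dagger)$, which expresses $d(a,Z(\varphi^M))$ as $\big(\inf_x(\alpha(\varphi(x))+d(a,x))\big)^M$. Here $\alpha\colon[0,\infty)\to[0,\infty)$ is the nondecreasing continuous function with $\alpha(0)=0$ attached to the modulus $\delta(\epsilon)$ by \cite[Remark 2.12]{mtfms}; the discussion just before the proposition records that this passage is effective, so that a computable modulus $\delta$ produces a computable function $\alpha$. Thus the first step is simply to invoke that construction to obtain an algorithm computing $\alpha$ from the given computable modulus.

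The second step is to turn $\inf_x(\alpha(\varphi(x))+d(a,x))$ into a genuine computable $L$-formula in the sense of Section~2. Since $\varphi$ is a computable formula it has bounded range, and from a description of $\varphi$ one can compute a constant $K$ with $0\le\varphi^M\le K$ in every $L$-structure $M$; replacing $\alpha$ by a computable connective $\tilde\alpha$, of compact support, that agrees with $\alpha$ on $[0,K]$ therefore leaves the value of $\alpha(\varphi(x))$ unchanged while making it a legitimate connective. Writing $d(x,y)$ also for the quantifier-free formula computing the metric on the relevant power of $M$, the formula $$\theta(x,y)\ :=\ \tilde\alpha(\varphi(x))+d(x,y)$$ is then a computable $L$-formula, and it is quantifier-free whenever $\varphi$ is, since composing a quantifier-free formula with a connective and then adding an atomic formula preserves quantifier-freeness.

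The third step is to apply the Lemma above to $\theta$ with tolerance $\eta$, producing a restricted formula $\theta_m(x,y)$ with $\norm{\theta-\theta_m}<\eta$ that is quantifier-free if $\theta$ is. The formula $\inf_x\theta_m(x,y)$ is then again a restricted $L$-formula --- the notion of restricted formula constrains only the connectives, not the quantifiers --- so a search through the fixed enumeration $(\varphi_m)$ returns an index $n$ with $\varphi_n=\inf_x\theta_m$, and this $n$ is the output. If $\varphi$ is quantifier-free then $\theta$, hence $\theta_m$, is quantifier-free, so $\varphi_n$ is existential, which is the ``moreover'' clause. Finally, because $\inf_x$ is $1$-Lipschitz for the uniform norm, $(\dagger)$ gives, for every $a\in M$, $$\ab{d(a,Z(\varphi^M))-\varphi_n(a)^M}=\ab{\big(\inf_x\theta\big)^M(a)-\big(\inf_x\theta_m\big)^M(a)}\le\norm{\theta-\theta_m}<\eta .$$

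The only genuinely delicate point is the first step: one must confirm that the construction of $\alpha$ from $\delta$ in \cite[Remark 2.12]{mtfms} is effective uniformly in the rational data, so that a computable modulus indeed yields a computable $\alpha$ by an algorithm. This is exactly what the paragraph preceding the proposition asserts; granting it, the remainder is bookkeeping --- producing the bound $K$, noting that $\inf$ interacts with the uniform approximation of the Lemma as a contraction, and locating $\inf_x\theta_m$ within the enumeration $(\varphi_m)$.
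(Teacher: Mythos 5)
Your proposal is correct and follows essentially the same route as the paper, whose ``proof'' is precisely the discussion preceding the proposition: the identity $(\dagger)$, the observation that a computable modulus yields a computable $\alpha$ via \cite[Remark 2.12]{mtfms}, and an application of the Lemma to the computable formula $\alpha(\varphi(x))+d(x,y)$ followed by $\inf_x$, which is $1$-Lipschitz and turns quantifier-free approximants into existential ones. Your additional bookkeeping (truncating $\alpha$ to a compactly supported connective on a computable bound for the range of $\varphi$, and locating $\inf_x\theta_m$ in the enumeration) is harmless and consistent with the paper's framework.
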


We note also that if $\varphi$ is an almost-near formula for $M$, then it is also an almost-near formula for any ultrapower $M^\u$ of $M$ (this is where the asymmetry in the types of inequalities used in the definition for almost-near formulae comes into play) and that the formula $(\dagger)$ and the conclusion of the previous proposition also hold for $M^\u$ as well.

In the remainder of this section, we discuss the notion of computability and decidability of theories.  An issue arises in that there are two common definitions of the theory of a metric structure.  While equivalent for model-theoretic purposes, the presence of these two different formulations creates some subtleties when bringing comutability-theoretic ideas into the picture.

First, given an $L$-structure $M$, the \textbf{theory of $M$} is the function $\Th(M)$ whose domain is the set of $L$-sentences and which is defined by $\Th(M)(\sigma):=\sigma^M$.  The \textbf{universal theory of $M$}, denoted $\Th_\forall(M)$, is the restriction of $\Th(M)$ to the set of universal $L$-sentences.

\begin{defn}
Let $M$ be an $L$-structure.  We say that \textbf{the (universal) theory of $M$ is computable} if there is an algorithm which takes as inputs a restricted (universal) $L$-sentence $\sigma$ and a rational number $\delta>0$ and returns rational numbers $a<b$ with $b-a<\delta$ and for which $\sigma^M\in (a,b)$.
\end{defn}

One also uses the word theory in continuous logic as a synonym for a set of $L$-sentences.  In this case, given an $L$-structure $M$, the theory of $M$ is the set $\{\sigma \ : \ \sigma^M=0\}$ and the universal theory of $M$ is the intersection of the theory of $M$ with the set of universal $L$-sentences.  Since a theory is a set of sentences, we believe the following terminology is appropriate:

\begin{defn}
A theory $T$ is \textbf{decidable} if there is an algorithm which, upon input a restricted $L$-sentence $\sigma$, decides whether or not $\sigma$ belongs to $T$.  Similarly, $T$ is \textbf{effectively enumerable} if there is an algorithm which enumerates the restricted $L$-sentences that belong to $T$.
\end{defn}

It is clear that each version of the theory of $M$ can be recovered from the other version, whence, from the point of view of model theory, there is no harm in blurring the distinction.  However, from the computability-theoretic perspective, there is a difference.  Indeed, while it is clear that the decidability of the theory of $M$ implies its computability, the converse need not be true.

There is a proof system for continuous logic, first introduced in \cite{BYP}.  There, one defines the relation $T\vdash \sigma$, where $T$ is a restricted $L$-theory and $\sigma$ is a restricted $L$-sentence.  A feature of this proof system is that, if $T$ is effectively enumerable, then so is the set of $\sigma$ such that $T\vdash \sigma$.  The following version of the completeness theorem, first proven in \cite{BYP}, will play a large role in the sequel:

\begin{fact}
For any restricted $L$-theory $T$ and any restricted $L$-sentnce $\sigma$, we have
$$\sup\{\sigma^M \ : \ M\models T\}=\inf\{r\in \bb Q^{>0} \ : \ T\vdash \sigma\dminus r\}.$$
\end{fact}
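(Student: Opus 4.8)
The plan is to obtain this statement as a repackaging of the soundness and completeness of the proof system $\vdash$ of \cite{BYP}; strictly speaking, one is recounting the proof of that completeness theorem. The first step is a translation. For a rational $r>0$ and an $L$-structure $M$, one has $\sigma^M\le r$ iff $(\sigma\dminus r)^M=0$, and since $\sigma\dminus r$ is everywhere nonnegative this is the same as $M$ satisfying the condition ``$\sigma\dminus r\le 0$''. Thus $\sup\{\sigma^M:M\models T\}\le r$ precisely when $T$ \emph{semantically} entails ``$\sigma\dminus r\le 0$'', and the asserted equality says that this semantic threshold matches the syntactic one, the infimum over $r$ absorbing the $\varepsilon$-slack inherent in completeness for metric structures.

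The inequality $\le$ is soundness: if $T\vdash\sigma\dminus r$ then $(\sigma\dminus r)^M=0$, hence $\sigma^M\le r$, for every $M\models T$, so $\sup\{\sigma^M:M\models T\}\le r$; taking the infimum over all admissible $r$ gives the claim. (Here one assumes $T$ satisfiable and that the sup in question is $\ge 0$, as happens in all of the paper's applications, where $\sigma$ is a supremum of a nonnegative formula; the degenerate cases are governed by conventions rather than by the argument.)

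For the inequality $\ge$, put $s:=\sup\{\sigma^M:M\models T\}$; it suffices to show $T\vdash\sigma\dminus r$ for every rational $r>s$, for then the infimum on the right is $\le s$. Fix a rational $q$ with $s<q<r$. Since $\sigma^M\le s<q$ for all $M\models T$, we get $(\sigma\dminus q)^M=0$ for all such $M$, i.e.\ $T$ semantically entails ``$\sigma\dminus q\le 0$''. The heart of the matter is then the model-existence half of the completeness theorem: because $T$ does not semantically refute being ``$(r-q)$-close to satisfying $\sigma\dminus q\le 0$'' (it in fact entails it), one produces a derivation $T\vdash(\sigma\dminus q)\dminus(r-q)$; the elementary identity $(\sigma\dminus q)\dminus(r-q)=\sigma\dminus r$ (verify the cases $\sigma\ge r$, $q\le\sigma<r$, $\sigma<q$) then gives $T\vdash\sigma\dminus r$.

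The real work, and the main obstacle, is that model-existence theorem, which is proven by a Henkin construction adapted to metric structures: extend $T$ to a maximal consistent theory with Henkin witnesses for the $\inf$ and $\sup$ quantifiers, form the term model carrying the pseudometric read off from the provable values of the expressions $d(t,t')$, and pass to the metric completion; the uniform continuity moduli attached to the connectives and quantifiers ensure that the interpretations descend to the completion, that it is a genuine $L$-structure modeling $T$, and that every restricted sentence there takes the value prescribed by the maximal theory. Two points need care: carrying out this quotient-and-completion so that the resulting structure's sentence values exactly match the syntactic data, and the $\varepsilon$-bookkeeping — it is precisely the gap $q<r$ that forces the infimum in the statement, as one cannot in general achieve $T\vdash\sigma\dminus s$.
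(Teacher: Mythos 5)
Your proposal is correct and is essentially the paper's approach: the paper offers no independent argument for this Fact, citing it directly as the (Pavelka-style) completeness theorem of Ben Yaacov--Pederson \cite{BYP}, and your reduction to soundness plus the approximated completeness/model-existence theorem, with the identity $(\sigma\dminus q)\dminus(r-q)=\sigma\dminus r$ handling the $\varepsilon$-bookkeeping, is exactly how that result is obtained there. The only caveat, which you already flag, is the degenerate case of an unsatisfiable $T$ (and the implicit nonnegativity of $\sigma$), which the paper's statement also leaves to convention.
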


Suppose, in the previous display, that $\sigma$ is a universal sentence and that the common value is $0$.  If $T$ is effectively enumerable and we begin to enumerate the theorems of $T$, then we may never see the fact that $T\vdash \sigma$ even though $T\vdash \sigma\dminus \frac{1}{2^n}$ for all $n$.  This motivates the following definition:

\begin{defn}
Given an $L$-structure $M$, we say that the universal theory of $M$ is \textbf{weakly effectively enumerable} if one can effectively enumerate the sentences $\sigma\dminus r$, where $\sigma$ is a restricted universal sentence, $r\in \bb Q^{>0}$, and $\sigma^M\leq r$.
\end{defn}

For some structures (such as $\R$), the computability of the universal theory of the structure is equivalent to it being weakly effectively enumerable:

\begin{prop}
Suppose that $M$ is a separable $L$-structure that has a \textbf{computable presentation}.  Then $\Th_\forall(M)$ is computable if and only if it is weakly effectively enumerable.
\end{prop}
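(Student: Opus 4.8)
The plan is to recognize that all three notions in play---computability of $\Th_\forall(M)$, weak effective enumerability of $\Th_\forall(M)$, and the existence of a computable presentation of $M$---are assertions about one-sided computability of the reals $\sigma^M$, as $\sigma$ ranges over restricted universal sentences, and to use that a real number is computable exactly when it can be approximated by rationals from below \emph{and} from above, all uniformly. The key preliminary observation is that a computable presentation of $M$, with distinguished dense sequence $(a_i)_{i\in\mathbb N}$, already makes $\sigma\mapsto\sigma^M$ approximable from below, uniformly: from a code for a restricted universal sentence $\sigma$ one can compute a nondecreasing sequence of rationals with limit $\sigma^M$. Indeed, write $\sigma=\sup_{\bar x}\psi(\bar x)$ with $\psi$ quantifier-free and restricted; uniform continuity of $\psi^M$ together with density of $(a_i)$ gives $\sigma^M=\sup\{\psi^M(\bar a)\ :\ \bar a\text{ a tuple from }(a_i)\}$. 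A computable presentation provides, uniformly, rational approximations of arbitrary precision to the metric, the basic operations, and the basic predicates on tuples from $(a_i)$, and restricted quantifier-free formulae are built from these by computable connectives, so $\psi^M(\bar a)$ is a computable real, uniformly in $\bar a$ and in a code for $\psi$. Hence ``$q<\psi^M(\bar a)$'' is semidecidable for $q\in\mathbb Q$, and dovetailing over all tuples $\bar a$ shows that $\{q\in\mathbb Q\ :\ q<\sigma^M\}$ is computably enumerable, uniformly in $\sigma$.

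For the direction $(\Rightarrow)$, suppose $\Th_\forall(M)$ is computable. Feeding the hypothesized algorithm the inputs $(\sigma,2^{-n})$ for $n=1,2,\dots$ shows that $\sigma^M$ is a computable real, uniformly in a code for the restricted universal sentence $\sigma$; in particular, since one can then compute $\sigma^M$ to arbitrary precision and wait for confirmation, the set of pairs $(\sigma,r)$ with $r\in\mathbb Q^{>0}$ and $\sigma^M<r$ is computably enumerable. Enumerating the corresponding sentences $\sigma\dminus r$ witnesses that $\Th_\forall(M)$ is weakly effectively enumerable: each enumerated sentence satisfies $\sigma^M\le r$, and for each fixed $\sigma$ the enumerated witnesses $r$ have infimum $\sigma^M$. (The only sentences of the required shape omitted are the $\sigma\dminus r$ with $\sigma^M=r$ exactly, which is immaterial, as only the infimum of the witnesses is used below.)

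For the direction $(\Leftarrow)$, suppose $\Th_\forall(M)$ is weakly effectively enumerable, and fix a restricted universal sentence $\sigma$ and a rational $\delta>0$. Scanning the given enumeration of the sentences $\tau\dminus s$ and keeping those in which $\tau$ is $\sigma$ produces a computably enumerable set of rationals $r$ with $\sigma^M\le r$ and with infimum $\sigma^M$; passing to running minima yields a nonincreasing sequence of rational upper bounds $u_1\ge u_2\ge\cdots\to\sigma^M$. Running this in parallel with the nondecreasing sequence of rational lower bounds $\ell_1\le\ell_2\le\cdots\to\sigma^M$ from the first paragraph, wait until $u_k-\ell_k<\delta/2$ and return the interval $(\ell_k-\tfrac{\delta}{4},\,u_k+\tfrac{\delta}{4})$; it has length $<\delta$ and contains $[\ell_k,u_k]\ni\sigma^M$. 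Hence $\Th_\forall(M)$ is computable.

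I do not expect a serious obstacle: the content is the correct matching of the three notions, and the step deserving genuine care is the one in the first paragraph---that a computable presentation forces restricted universal sentences to have values approximable from below, uniformly---which relies on the definition of a computable presentation, on the computability of the connectives $u_n$ permitted in restricted formulae, and on the elementary fact that a countable supremum of a uniformly computable family of reals is approximable from below. The boundary remark in $(\Rightarrow)$, and the cosmetic widening to an open interval in $(\Leftarrow)$, are routine.
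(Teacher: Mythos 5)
Your argument is correct and takes essentially the same route as the paper, which only sketches the proof: the nontrivial direction is handled exactly as you do, by brute-force lower-bound approximation of $\sigma^M$ over the dense set of a computable presentation, interleaved with the upper bounds coming from the weak effective enumeration. Your treatment of the easy direction, including the remark that the boundary cases $\sigma^M=r$ are immaterial, is consistent with how the paper itself uses the definition of weak effective enumerability.
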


Roughly speaking, $M$ has a computable presentation if there is a countable, dense subset of $M$ so that one can uniformly approximately compute the values of the symbols in the langauge on the countable dense set.  As mentioned in \cite{GH} (and elaborated on in \cite{GH2}), $\R$ has a computable presentation.  The proof of the nontrivial direction of the previous proposition follows by using the computable presentation to perform a brute force lower bound approximation to the value of any universal sentence.

\section{Proof of the main theorem}

\begin{defn}

Fix $n,m\in \bb N$.

\begin{enumerate}
\item A sequence of projections $(C_a : a \leq m)$ such that $\sum_a C_a = 1$ is called a \textbf{projection valued measure} (PVM).  
\item The set $\cqs(n,m)$ of \textbf{quantum correlations} consists of the correlations of the form $p(a,b|x,y)= \langle A^x_a \otimes B^y_b\xi,\xi \rangle$ for $x,y \leq n$ and $a,b \leq m$,
where H is a finite-dimensional Hilbert space, $\xi \in H \otimes H$ is a unit vector, and for every $x, y \leq n$,
$(A^x_a: a\leq m)$ and $(B^y_b:b\leq m)$ are PVMs on H.
\item We set $\cqa(n,m)$ to be the closure in $[0,1]^{n^2k^2}$ of $\cqs(n,m)$.
\item Given an element $p\in C_{qa}(n,m)$, we say that $p$ is \textbf{synchronous} if\\ $p(i,j|v,v)=0$ whenever $i\not=j$.  We let $C_{qa}^s(n,m)$ denote the set of \textbf{synchronous} correlation matrices.  
\end{enumerate}
\end{defn}

\begin{defn}
A \textbf{nonlocal game $\mathfrak G$ with $n$ questions and $m$ answers} is a probability distribution $\mu$ on $n \times n$ together with a decision function 
\[
D:n\times n \times m \times m \rightarrow \{0,1\}.
\]
We call the nonlocal game \textbf{synchronous} if $D(v,v,i,j)=0$ whenever $i\not=j$.
\end{defn}

\begin{defn}
For each nonlocal game $\mathfrak G$, recall that the \textbf{entangled value} of $\mathfrak G$ is the quantity
$$\val^*(\mathfrak G)=\sup_{p\in C_{qa}(n,m)}\sum_{v,w}\mu(v,w)\sum_{i,j}D(v,w,i,j)p(i,j|v,w).$$ We also define the \textbf{synchronous value} of $\mathfrak G$ to be the quantity $$\pval^*(\mathfrak G)=\sup_{p\in C_{qa}^s(n,m)}\sum_{v,w}\mu(v,w)\sum_{i,j}D(v,w,i,j)p(i,j|v,w).$$
\end{defn}

In general, $\pval^*(\mathfrak G)\leq \val^*(\mathfrak G)$.  




The following is the main result of \cite{MIP*}:
\begin{thm}\label{main-MIP*}
There is an effective map $\cal M\mapsto \mathfrak G_\cal M$ from Turing machines to synchronous nonlocal games such that:
\begin{itemize}
    \item if $\cal M$ halts, then $\pval^*(\mathfrak G_\cal M)=1$;
    \item if $\cal M$ does not halt, then $\val^*(\mathfrak G_\cal M)\leq \frac{1}{2}$.
\end{itemize}
\end{thm}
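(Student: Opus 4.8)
The plan is to post-process the games produced by the inclusion $\mathrm{RE}\subseteq\mathrm{MIP}^*$ of \cite{MIP*}, which I would treat as a black box. That result supplies a polynomial-time, hence effective, map $\cal M\mapsto\mathfrak G'_{\cal M}$ from Turing machines to nonlocal games such that $\val^*(\mathfrak G'_{\cal M})=1$ when $\cal M$ halts and $\val^*(\mathfrak G'_{\cal M})\leq\e_0$ when $\cal M$ does not halt, where $\e_0>0$ is a constant we may take as small as we wish (amplifying the gap of \cite{MIP*} by one round of anchored parallel repetition if necessary). All the depth of the theorem is concentrated here; the two remaining steps are elementary, effectiveness-preserving game manipulations.

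First I would make the game synchronous. Given $\mathfrak G'=\mathfrak G'_{\cal M}$ with question distribution $\mu'$, attach a label to each question and form the game $\mathfrak G''$ that, with probability $p$ (a constant just below $\tfrac12$), runs a consistency test — sample $v$ from a marginal of $\mu'$, send the labelled question $(v,\mathrm c)$ to both players, accept iff their answers agree — and, with probability $1-p$, plays a symmetrized copy of $\mathfrak G'$, always sending the two players differently labelled questions $(v,\mathrm{left})$ and $(w,\mathrm{right})$. The game branch thus never produces a diagonal question pair, so every diagonal pair comes from the consistency branch, where unequal answers are rejected; hence $\mathfrak G''$ is synchronous. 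Completeness survives for a clean reason: if $\val^*(\mathfrak G')=1$ there are finite-dimensional strategies for $\mathfrak G'$ of value tending to $1$, and each may be put in canonical form — maximally entangled state $|\phi\rangle$, projective measurements, the second player's operators the entrywise conjugates of the first's — which passes the consistency branch with probability exactly $1$ since $\sum_a\langle\phi|A^v_a\otimes\overline{A^v_a}|\phi\rangle=\tfrac1d\sum_a\Tr(A^v_a)=1$; used unchanged in the game branch, it shows $\val^*(\mathfrak G'')=1$. Soundness survives too: any finite-dimensional strategy for $\mathfrak G''$ restricted to the game branch is a strategy for the symmetrization of $\mathfrak G'$, and so wins there with probability at most $\val^*(\mathfrak G')\leq\e_0$, while the consistency branch is won with probability at most $1$; hence $\val^*(\mathfrak G'')\leq p+(1-p)\e_0$, which is $<\tfrac12$ once $\e_0$ is small.

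Next I would make the game proper. Replace the question distribution $\mu''$ of $\mathfrak G''$ by $(1-\e)\mu''+\e\,\nu$ with $\nu$ uniform and $\e$ a small constant, and extend the decision predicate to the newly supported pairs by accepting unconditionally on new off-diagonal pairs and accepting only equal answers on new diagonal pairs. The resulting game $\mathfrak G_{\cal M}$ is proper and still synchronous; it stays perfectly winnable when $\cal M$ halts (extend a near-optimal strategy for $\mathfrak G''$ by having both players output a fixed common answer on the new questions, winning the added rounds outright), and when $\cal M$ does not halt its entangled value is at most $(1-\e)\bigl(p+(1-p)\e_0\bigr)+\e<\tfrac12$ for $\e_0$ and $\e$ small. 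Each step — invoking \cite{MIP*}, forming the labelled consistency test and symmetrization, and padding the distribution — is an explicit finite manipulation of the pair $(\mu,D)$, so the composite is the desired effective map to proper, synchronous games.

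The genuine obstacle, of course, is that this statement is in essence the $\mathrm{MIP}^*=\mathrm{RE}$ theorem, which I am importing wholesale rather than proving; the only real work left is the bookkeeping above, whose delicate points are keeping the soundness value strictly below $\tfrac12$ through both transformations — handled by taking $\e_0$ and $\e$ small — and arranging the labels so that the game branch never creates a diagonal question pair, which is exactly what makes the final game synchronous on the nose. Alternatively, and in the spirit of the footnote, one can skip the post-processing entirely by inspecting the protocol of \cite{MIP*}: its verifier already tests consistency of the answers to repeated questions and its question distribution is of full support, so the games it outputs are proper and synchronous as produced.
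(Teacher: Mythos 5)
Your closing remark is in fact the paper's entire proof: Theorem \ref{main-MIP*} is quoted from \cite{MIP*} as a black box, with the properness and synchronicity of the games $\mathfrak G_{\cal M}$ read off from the protocol itself (the verifier already includes consistency checks on repeated questions and a full-support question distribution); no post-processing is performed. So the safe route you mention at the end is exactly the paper's route.

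Your main argument — a generic synchronization-plus-padding reduction — has a genuine gap at the completeness step. You claim that any finite-dimensional strategy of value close to $1$ for $\mathfrak G'$ ``may be put in canonical form'' (maximally entangled state, projective measurements, second player conjugate to the first) and then ``used unchanged in the game branch.'' The consistency computation $\sum_a\langle\phi|A^v_a\otimes\overline{A^v_a}|\phi\rangle=1$ is correct, but the conversion itself is unjustified: replacing an arbitrary near-optimal strategy by one on a maximally entangled state with conjugate measurements can strictly decrease the value of the game branch. The ``maximally entangled'' value of a game is in general smaller than $\val^*$, and since the games coming from \cite{MIP*} may attain value $1$ only in the limit over $C_{qa}$, you would need a quantitative statement that value-$(1-\delta)$ strategies can be symmetrized and made perfectly self-consistent with vanishing loss — precisely the kind of statement (cf.\ Vidick's work on almost synchronous correlations) that requires real proof and is not available for arbitrary games by fiat. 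The same unproved claim is what makes your properness padding complete, since the newly weighted diagonal pairs $((v,\mathrm{left}),(v,\mathrm{left}))$ are only answered consistently by strategies of the canonical form. So as written, the soundness side of your reduction is fine, but completeness of the synchronized game is not established; the fix is either to prove such a symmetrization lemma or, as in the paper (and your last paragraph), to use that the honest strategies constructed in \cite{MIP*} are already synchronous and the games proper, so that $\pval^*(\mathfrak G_{\cal M})=1$ when $\cal M$ halts without any transformation.
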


\begin{remark}
The fact that the games  in the statement of the previous theorem are synchronous does not appear explicitly in \cite{MIP*} but is an artifact of their proof.  Moreover, in the case that $\cal M$ halts, the fact that a winning strategy can be taken to be synchronous is also an artifact of the proof (see Remark 5.12 in \cite{MIP*}).
\end{remark}



We let $\varphi_{n,m}(x_{v,i})$ ($i=1,\ldots,m$, $v=1,\ldots,n$) denote the (computable!) formula
$$\max\left(\max_{v,i}\|x_{v,i}^2-x_{v_i}\|_2,\max_{v,i}\|x_{v,i}^*-x_{v,i}\|_2,\max_v\|\sum_ix_{v,i}-1\|_2\right).$$
We let $X^N_{n,m}$ denote the zeroset of $\varphi_{n,m}$ in $N$.  Note that elements of $X^N_{n,m}$ are $n$-tuples of PVMs in $N$, where each PVM in the tuple consists of $m$ orthogonal projections.

\begin{thm}
Each formula $\varphi_{n,m}$ is an almost-near formula for $\mathcal R$ with a computable modulus. 
\end{thm}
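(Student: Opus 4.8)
The strategy is to show that any tuple $(x_{v,i})$ in $\mathcal{R}$ on which $\varphi_{n,m}$ takes a small value can be perturbed, in a controlled way, to a genuine tuple of PVMs. I would break this into three perturbation steps, each with an explicit (polynomial-type, hence computable) bound. First, from $\|x_{v,i}^* - x_{v,i}\|_2$ small, replace each $x_{v,i}$ by its self-adjoint part $y_{v,i} := \frac{1}{2}(x_{v,i} + x_{v,i}^*)$; this moves each coordinate by at most $\frac{1}{2}\|x_{v,i}^* - x_{v,i}\|_2$ and keeps the other defects small (up to constants and the uniform norm bound on the domain of definition of the formula). Second, from $\|y_{v,i}^2 - y_{v,i}\|_2$ small, use the standard fact that an operator which is nearly a projection in $\|\cdot\|_2$ is $\|\cdot\|_2$-close to an actual projection: apply the spectral projection $p_{v,i} := \chi_{[1/2,\infty)}(y_{v,i})$ (a genuine projection since $y_{v,i}$ is self-adjoint, using functional calculus), and bound $\|y_{v,i} - p_{v,i}\|_2$ by a computable function of $\|y_{v,i}^2 - y_{v,i}\|_2$ via the inequality $|t - \chi_{[1/2,\infty)}(t)| \le C\sqrt{|t^2 - t|}$ on a suitable bounded interval, integrated against the spectral measure. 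Third, within each fixed $v$, the projections $p_{v,1}, \dots, p_{v,m}$ now satisfy $\|\sum_i p_{v,i} - 1\|_2$ small, but they need not be pairwise orthogonal; one successively orthogonalizes them (or, equivalently, passes to the projections onto an orthonormalized system of ranges) — here is where one uses that $\mathcal{R}$ has, e.g., a unital copy of a matrix algebra / Haar unitary available, or more simply that nearby commuting-square configurations can be straightened inside a II$_1$ factor — to obtain a true PVM $(C_{v,i})_i$ with $\sum_i C_{v,i} = 1$ and $\sum_i \|p_{v,i} - C_{v,i}\|_2$ bounded by a computable function of the previous defects.

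The main obstacle is the third step: turning a near-PVM into an exact PVM. Making the projections in a single row exactly orthogonal and exactly summing to $1$ is delicate because one must control all $m$ coordinates simultaneously, and naive Gram–Schmidt-type orthogonalization of projections can blow up constants or fail to produce projections of the right ranks. The clean way is to invoke that $\mathcal{R}$ is a II$_1$ factor with the Dixmier property / approximate transitivity of the unitary group, or to use a direct homotopy/polar-decomposition argument: given near-orthogonal projections, the operator $\sum_i p_{v,i}$ is $\|\cdot\|_2$-close (and, after also controlling operator-norm behaviour on the domain where the formula lives, norm-close) to $1$, hence invertible, and one can conjugate by $(\sum_i p_{v,i})^{-1/2}$-type corrections and then round again via functional calculus. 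Each such operation has an explicit modulus-of-continuity bound. I would assemble these bounds into a single $\delta(\epsilon)$ by composition; since each constituent bound is of the form "move by at most a constant times a rational power of the input, uniformly on a fixed bounded ball", the composite modulus is computable when restricted to rational $\epsilon$, which is exactly what Proposition~\ref{approx} requires.

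Two remarks guide the write-up. All the functional calculus steps take place inside $\mathcal{R}$ because $\mathcal{R}$ is a von Neumann algebra closed under Borel functional calculus of self-adjoint elements, so the perturbed elements genuinely live in $\mathcal{R}$ — this is the place where we use that the ambient structure is $\mathcal{R}$ and not merely an arbitrary $L$-structure, although in fact the argument works verbatim for any II$_1$ factor, which is why the same formula later serves for $C_{qa}$-type considerations. Second, one should be slightly careful that the defining formula for the zeroset, $\varphi_{n,m}$, is evaluated on the bounded domain of the sort (operator-norm ball of radius $1$, say, in the language of tracial von Neumann algebras), so all the "up to constants" estimates are genuinely uniform; this is what allows the modulus to be independent of the particular tuple and of $\mathcal{R}$'s internal structure beyond it being a II$_1$ factor. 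Putting the three moduli together and taking $\delta(\epsilon)$ to be (a computable lower bound for) the largest input making the composite perturbation at most $\epsilon$ finishes the proof.
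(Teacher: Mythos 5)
Your first two steps are fine: passing to the self-adjoint part costs at most $\tfrac{1}{2}\|x_{v,i}^*-x_{v,i}\|_2$, and since $|t-\chi_{[1/2,\infty)}(t)|\leq 2|t^2-t|$ for all real $t$, the spectral projection $p_{v,i}=\chi_{[1/2,\infty)}(y_{v,i})$ satisfies $\|y_{v,i}-p_{v,i}\|_2\leq 2\|y_{v,i}^2-y_{v,i}\|_2$, with all constants explicit because the formula is evaluated on the operator-norm unit ball. The genuine gap is in your third step. Smallness of $\|\sum_i p_{v,i}-1\|_2$ does \emph{not} give operator-norm closeness to $1$, nor invertibility of $\sum_i p_{v,i}$: take $p_{v,1}=1-e$ with $e$ a projection of tiny trace and $p_{v,i}=0$ for $i\geq 2$; the $2$-norm defect is $\tau(e)^{1/2}$, yet the sum has a kernel. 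So the proposed correction by $\left(\sum_i p_{v,i}\right)^{-1/2}$ is simply not available, and even where it is, conjugation by a non-unitary invertible destroys self-adjointness of the idempotents, so it does not produce a PVM without further repair. The appeals to the Dixmier property or approximate transitivity of the unitary group of $\mathcal R$ do not bear on this problem at all; indeed no factoriality is needed, as the statement holds in any tracial von Neumann algebra.

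The standard repair --- and in essence the proof of \cite[Lemma 3.5]{KPS}, which is all the paper invokes --- is an inductive corner argument carried out entirely in $\|\cdot\|_2$: having found pairwise orthogonal projections $q_{v,1},\dots,q_{v,j-1}$ close to $p_{v,1},\dots,p_{v,j-1}$, compress $p_{v,j}$ to the corner determined by $1-q_{v,1}-\cdots-q_{v,j-1}$ (the row-sum and idempotency defects force $p_{v,j}$ to be $2$-norm almost orthogonal to the earlier projections, so the compression is still an approximate projection with quantitative control), round it to a projection by functional calculus inside that corner, and finally set $q_{v,m}:=1-\sum_{i<m}q_{v,i}$, checking it is $2$-norm close to $p_{v,m}$. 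Each step is an explicit polynomial-type estimate on the unit ball, so the composite modulus $\delta(\epsilon)$ is computable, which is exactly what the theorem asserts. So either carry out that induction in place of your third step, or do what the paper does and quote \cite[Lemma 3.5]{KPS} together with the observation that its proof yields an explicit, computable modulus.
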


\begin{proof}
This follows immediately from \cite[Lemma 3.5]{KPS} and its proof.
\end{proof}


Given a nonlocal game $\mathfrak G$, let $\psi_\mathfrak G(x_{v,i})$ denote the formula
$$\sum_{v,w}\mu(v,w)\sum_{i,j}D(v,w,i,j)\tr(x_{v,i}x_{w,j}).$$

\begin{thm}
For any game $\mathfrak G$, we have $$\pval^*(\mathfrak G)=\left(\sup_{x_{v,i}\in X_{n,m}}\psi_\mathfrak G(x_{v,i})\right)^{\R}.$$
\end{thm}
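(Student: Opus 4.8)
The plan is to establish the identity $\pval^*(\mathfrak G) = \left(\sup_{x_{v,i} \in X_{n,m}} \psi_{\mathfrak G}(x_{v,i})\right)^{\R}$ by relating synchronous quantum correlations in $C_{qa}^s(n,m)$ to tuples of PVMs in $\R$ and its ultrapowers, using the trace to recover correlation values. The key technical input is the characterization of synchronous correlations: a synchronous correlation $p \in C_{qa}^s(n,m)$ arises precisely from a tracial von Neumann algebra $(N,\tau)$ with a PVM $(A^v_i : i \leq m)$ for each $v \leq n$ via $p(i,j|v,w) = \tau(A^v_i A^w_j)$, and moreover $C_{qa}^s(n,m)$ is exactly the set of such tuples with $N$ embeddable in an ultrapower $\R^{\u}$ (this is the synchronous analogue of the connection between $C_{qa}$ and the Connes embedding problem, essentially due to Paulsen--Severini--Stahlke--Todorov--Winter and refined in \cite{KPS}). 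So I would first recall this equivalence, then translate it into the language of $X_{n,m}$.

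First I would show the inequality $\left(\sup_{x_{v,i} \in X^{\R}_{n,m}} \psi_{\mathfrak G}(x_{v,i})\right) \leq \pval^*(\mathfrak G)$: given any tuple of PVMs $(x_{v,i}) \in X^{\R}_{n,m}$, the matrix $p(i,j|v,w) := \tr(x_{v,i} x_{w,j})$ is a synchronous correlation lying in $C_{qa}^s(n,m)$ (since $\R$ embeds into $\R^{\u}$, or directly since finite-dimensional approximations give elements of $\cqs$), and $\psi_{\mathfrak G}(x_{v,i})$ is exactly $\sum_{v,w}\mu(v,w)\sum_{i,j}D(v,w,i,j)p(i,j|v,w)$, which is bounded by the supremum defining $\pval^*(\mathfrak G)$. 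For the reverse inequality I would take $p \in C_{qa}^s(n,m)$ realizing a value close to $\pval^*(\mathfrak G)$; by the synchronous characterization there is a tuple of PVMs in some $\R^{\u}$ (or even in $\R$ after a density/approximation argument) realizing $p$ via the trace. Since $\varphi_{n,m}$ is an almost-near formula for $\R$ and hence for $\R^{\u}$, and since $X_{n,m}$ is a definable set, the supremum of $\psi_{\mathfrak G}$ over $X_{n,m}$ computed in $\R$ equals that computed in $\R^{\u}$ (the value of a formula involving a $\sup$ over a definable set is preserved under ultrapowers by {\L}o\'s's theorem together with the definability of $X_{n,m}$). This gives $\pval^*(\mathfrak G) \leq \left(\sup_{x_{v,i}\in X_{n,m}}\psi_{\mathfrak G}(x_{v,i})\right)^{\R^{\u}} = \left(\sup_{x_{v,i}\in X_{n,m}}\psi_{\mathfrak G}(x_{v,i})\right)^{\R}$.

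The main obstacle I expect is handling the passage between $\R$ and its ultrapower cleanly, specifically justifying that the supremum over the definable set $X_{n,m}$ is computed the same way in $\R$ and $\R^{\u}$, and that an arbitrary element of $C_{qa}^s(n,m)$ — which a priori comes from a possibly non-hyperfinite tracial von Neumann algebra that merely embeds into $\R^{\u}$ — can be witnessed by PVMs actually inside $X^{\R^{\u}}_{n,m}$. The first point follows from the general fact noted right after Proposition~\ref{approx} in the excerpt: $\varphi_{n,m}$ remains an almost-near formula for $\R^{\u}$ and the formula $(\dagger)$ persists, so quantifying over $Z(\varphi_{n,m})$ is first-order and {\L}o\'s applies. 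The second point is exactly where the synchronous correlation theory does its work: projections in $N \hookrightarrow \R^{\u}$ sum to $1$ and the embedding is trace-preserving, so their images lie in $X^{\R^{\u}}_{n,m}$ and reproduce $p$; a routine approximation then lets one assume the witnessing tuple already lies in $\R^{\u}$ with the right moments up to $\ve$. Everything else is bookkeeping with the definition of $\psi_{\mathfrak G}$ and the fact that $\tr$ in a II$_1$ factor is the canonical trace.
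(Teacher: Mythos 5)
Your argument is correct and is essentially the paper's: the paper simply cites the equivalence of (1) and (4) in \cite[Theorem 3.6]{KPS}, which is exactly the characterization of synchronous $C_{qa}$-correlations by traces of products of PVM entries in an $\R^\u$-embeddable tracial von Neumann algebra that you invoke. Your additional care about passing between $\R$ and $\R^\u$ via the definability of $X_{n,m}$ is the same routine point the paper absorbs into its use of that theorem (and reuses in the proof of Theorem \ref{main}), so there is no substantive difference in approach.
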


\begin{proof}
This follows immediately from the equivalence of (1) and (4) in \cite[Theorem 3.6]{KPS}.
\end{proof}

\begin{thm}\label{main}
Suppose that $\Th_\forall(\R)$ is computable.  Then for any computable game $\mathfrak G$ (meaning that the $\mu(v,w)$ are computable reals), we have that $\pval^*(\mathfrak G)$ is a computable real, uniformly in the description of $\mathfrak G$.
\end{thm}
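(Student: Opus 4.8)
The plan is to combine the two preceding theorems, which express $\pval^*(\mathfrak G)$ as the value of an existential-type formula over the definable set $X_{n,m}$ in $\R$, with Proposition~\ref{approx}, which lets us replace the quantification over the definable set $X_{n,m}^{\R}$ by an honest existential $L$-formula that $\R$ can evaluate to any desired precision via the computability of $\Th_\forall(\R)$. Concretely, by the previous theorem we have $\pval^*(\mathfrak G)=\bigl(\sup_{x_{v,i}\in X_{n,m}}\psi_{\mathfrak G}(x_{v,i})\bigr)^{\R}$, and since $\varphi_{n,m}$ is an almost-near formula for $\R$ with a computable modulus (again by a preceding theorem), Proposition~\ref{approx} produces, from a rational tolerance $\eta>0$, an index $n$ so that $\varphi_n$ is an existential formula with $|d(a,Z(\varphi_{n,m}^{\R}))-\varphi_n(a)^{\R}|<\eta$ for all tuples $a$ in $\R$.

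First I would use the identity $(\dagger)$ to write $d(a,X_{n,m}^{\R})=\bigl(\inf_x(\alpha(\varphi_{n,m}(x))+d(a,x))\bigr)^{\R}$ with $\alpha$ computable, and then observe — as in the standard treatment of definable sets in continuous logic — that $\sup_{x\in X_{n,m}^{\R}}\psi_{\mathfrak G}(x)$ can be approximated by a supremum of the form $\sup_x\bigl(\psi_{\mathfrak G}(x)\dminus K\cdot d(x,X_{n,m}^{\R})\bigr)$ for a suitable modulus of (uniform) continuity $K$ of $\psi_{\mathfrak G}$ on the bounded domain of operator-norm-$\le 1$ tuples (the $x_{v,i}$ range over projections, hence over a bounded set, so $\psi_{\mathfrak G}$, being a fixed polynomial combination of traces of products, is uniformly continuous there with a computable modulus depending effectively on $\mathfrak G$, since the $\mu(v,w)$ and the values of $D$ are computable). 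Substituting the approximation to $d(x,X_{n,m}^{\R})$ coming from Proposition~\ref{approx}, and noting that all the connectives involved ($\dminus$, $+$, scalar multiplication, $\alpha$) are computable, yields a single computable universal (indeed, the negation inside makes the relevant sentence $\sup_x(\cdots)$, a universal sentence) $L$-sentence $\sigma_{\mathfrak G,\eta}$ with $|\pval^*(\mathfrak G)-\sigma_{\mathfrak G,\eta}^{\R}|<\eta$, where the passage from $\sigma_{\mathfrak G,\eta}$ to a restricted sentence within a further tolerance is handled by the Lemma on approximating computable formulae by restricted ones.

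Finally, feeding $\sigma_{\mathfrak G,\eta}$ and a rational $\delta<\eta$ into the algorithm witnessing the computability of $\Th_\forall(\R)$ produces a rational interval of radius $<\delta$ containing $\sigma_{\mathfrak G,\eta}^{\R}$, and hence a rational interval of radius $<2\eta+\delta$ containing $\pval^*(\mathfrak G)$; letting $\eta\to 0$ gives a procedure computing $\pval^*(\mathfrak G)$ as a real. All the maps invoked — the construction of $\varphi_{n,m}$ and $\psi_{\mathfrak G}$ from $\mathfrak G$, the algorithm of Proposition~\ref{approx}, the computable modulus $\alpha$, the modulus $K$ for $\psi_{\mathfrak G}$, and the restricted-formula approximation — are effective and depend effectively on the description of $\mathfrak G$, so the whole procedure is uniform in $\mathfrak G$ as required. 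The main obstacle I anticipate is purely bookkeeping: verifying carefully that the constant $K$ (the modulus of continuity of $\psi_{\mathfrak G}$ on the projection domain) and the error introduced by replacing the definable-set quantifier with the $(\dagger)$-formula can both be controlled effectively and that their interaction does not blow up the tolerance — i.e., chasing the $\varepsilon$'s through the composition of Proposition~\ref{approx}, identity $(\dagger)$, and the Lemma — rather than any conceptual difficulty.
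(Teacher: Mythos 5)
Your proposal is correct and follows essentially the same route as the paper's proof: express $\pval^*(\mathfrak G)$ as $\bigl(\sup_{x\in X_{n,m}}\psi_{\mathfrak G}(x)\bigr)^{\R}$, use the (uniform continuity of) $\psi_{\mathfrak G}$ to rewrite this as $\sup_x\bigl(\psi_{\mathfrak G}(x)\dminus K\,d(x,X_{n,m})\bigr)$ (the paper simply notes $\psi_{\mathfrak G}$ is $1$-Lipschitz, so $K=1$), replace $d(x,X_{n,m})$ by an existential restricted formula via Proposition~\ref{approx}, observe the resulting sentence is (equivalent to) a universal restricted sentence, and evaluate it with the algorithm for $\Th_\forall(\R)$, all uniformly in $\mathfrak G$. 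The extra steps you include (invoking $(\dagger)$ directly and the restricted-approximation Lemma for the computable coefficients $\mu(v,w)$) are harmless bookkeeping already subsumed in the paper's use of Proposition~\ref{approx}.
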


\begin{proof}
For simplicity, set $X:=X_{n,m}$ and $x:=(x_{v,i})$.  Set $\sigma:=\sup_{x\in X}\psi_\mathfrak G(x_{v,i})$.  Note first that, since $\psi_\mathfrak G$ is 1-Lipshitz, we have $\sigma^\cal R=(\sup_x(\psi_\mathfrak G(x)\dminus d(x,X)))^\cal R$.  Now given $\eta>0$, one can effectively find an existential restricted formula $\varphi_n$ such that, for all $x\in \R^\u$, we have $|d(x,X^{\R^\u})-\varphi_n(x)^{\R^\u}|<\eta$.  It follows that $$\left| \sigma^\R-(\sup_x(\psi_\mathfrak G(x)\dminus \varphi_n(x)))^\R\right|<\eta.$$  Since the latter formula in the above display is equivalent to a universal restricted formula, the computability of the universal theory of $\R$ allows us to compute it to within $\eta$, and thus we can compute $\sigma^\R$ to within $2\eta$.  By the previous theorem, this is equivalent to being able to compute $\pval^*(\mathfrak G)$ to within $2\eta$.  

It is clear that these are considerations are uniform in the description of $\mathfrak G$.



\end{proof}

\begin{cor}
$\Th_\forall(\R)$ is not computable.
\end{cor}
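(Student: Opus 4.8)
The plan is to derive a contradiction with the undecidability of the halting problem by feeding the games supplied by the MIP*=RE theorem into the algorithm produced by Theorem~\ref{main}. So suppose, towards a contradiction, that $\Th_\forall(\R)$ is computable. By Theorem~\ref{main}, there is then an algorithm which, given a description of a computable nonlocal game $\mathfrak G$ together with a rational $\eta>0$, outputs a rational $q$ with $|q-\pval^*(\mathfrak G)|<\eta$, and this algorithm is uniform in the description of $\mathfrak G$.

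Next I would apply this to the games $\mathfrak G_{\cal M}$ furnished by Theorem~\ref{main-MIP*}. Since the map $\cal M\mapsto \mathfrak G_{\cal M}$ is effective, a description of $\mathfrak G_{\cal M}$ (its distribution $\mu$ and decision function $D$, which are finite objects) can be produced uniformly from a Turing machine $\cal M$; in particular each $\mathfrak G_{\cal M}$ is a computable game. Composing the two algorithms and fixing $\eta=\tfrac14$, I obtain an algorithm that takes as input a Turing machine $\cal M$ and outputs a rational $q_{\cal M}$ with $|q_{\cal M}-\pval^*(\mathfrak G_{\cal M})|<\tfrac14$.

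I then claim this algorithm decides the halting problem. If $\cal M$ halts, Theorem~\ref{main-MIP*} gives $\val^*(\mathfrak G_{\cal M})=1$, whence $\pval^*(\mathfrak G_{\cal M})=1$, so $q_{\cal M}>\tfrac34$. If $\cal M$ does not halt, Theorem~\ref{main-MIP*} gives $\val^*(\mathfrak G_{\cal M})\leq\tfrac12$, and since $\pval^*(\mathfrak G)\leq\val^*(\mathfrak G)$ always holds, we get $\pval^*(\mathfrak G_{\cal M})\leq\tfrac12$, so $q_{\cal M}<\tfrac34$. Hence $\cal M$ halts if and only if $q_{\cal M}>\tfrac34$, which is a decidable condition on the rational $q_{\cal M}$ — contradicting the undecidability of the halting problem. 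Therefore $\Th_\forall(\R)$ is not computable.

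The only point that needs care — and really the sole potential obstacle — is confirming that the games $\mathfrak G_{\cal M}$ genuinely satisfy the hypotheses of Theorem~\ref{main}, i.e. that they are computable games presented uniformly in $\cal M$. But this is exactly what the word \emph{effective} in Theorem~\ref{main-MIP*} asserts, so no further work is required there; the rest is a routine unwinding of definitions and of the gap between the two alternatives in Theorem~\ref{main-MIP*}.
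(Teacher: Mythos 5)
Your proposal is correct and is essentially the paper's own argument: assume computability, use Theorem~\ref{main} to approximate $\pval^*(\mathfrak G_{\cal M})$ to within $\tfrac14$ uniformly in $\cal M$, and exploit the gap in Theorem~\ref{main-MIP*} (together with the fact that $\pval^*=1$ when $\val^*=1$ for these proper synchronous games, and $\pval^*\leq\val^*$ in general) to decide the halting problem. The only difference is cosmetic: you threshold a single rational approximation at $\tfrac34$, while the paper does the equivalent case analysis on the endpoints of an interval of radius less than $\tfrac14$.
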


\begin{proof}
Suppose, towards a contradiction, that $\Th_\forall(\cal R)$ is computable.  Given a Turing machine $\cal M$, we use the effective map from Theorem \ref{main-MIP*} to construct the computable game $\mathfrak G_M$.  Using the previous theorem, we can compute an interval $(a,b):=(a_\cal M,b_\cal M)\subseteq [0,1]$ of radius smaller than $\frac{1}{4}$ such that $\pval^*(\mathfrak G_\cal M)\in (a,b)$.  If $a>\frac{1}{2}$, then $\val^*(\mathfrak G_\cal M)\geq \pval^*(\mathfrak G_\cal M)>\frac{1}{2}$, whence $\val^*(\mathfrak G_\cal M)=1$ and $\cal M$ halts.  If $a\leq \frac{1}{2}$, then $b<\frac{3}{4}$, whence $\pval^*(\mathfrak G_\cal M)<\frac{3}{4}$.  Since $\mathfrak G_\cal M$ is special, we have that $\val^*(\mathfrak G_\cal M)<1$ and hence $\cal M$ does not halt.  Since this allows us to decide the halting problem, we have reached a contradiction.
\end{proof}

\section{A reformulation in terms of noncommutative moments}

In this section, we offer a reformulation of our main theorem in terms that might be more appealing to operator algebraists.

Given positive integers $n$ and $d$, we fix variables $x_1,\ldots,x_n$ and enumerate all *-monomials in the variables $x_1,\ldots,x_n$ of total degree at most $d$, $m_1,\ldots,m_L$. (Of course, $L=L(n,d)$ depends on both $n$ and $d$.)  We consider the map $\mu_{n,d}:\R_1^n \rightarrow D^L$ given by $\mu_{n,d}(\vec a)=(\tau(m_i(\vec a)) \ : \ i=1,\ldots,L)$.  (Here, $D$ is the complex unit disk.)


We let $X(n,d)$ denote the range of $\mu_{n,d}$ and $X(n,d,p)$ be the image of the unit ball of $M_p(\mathbb C)$ under $\mu_{n,d}$.  Notice that $\bigcup_{p\in \bb N} X(n,d,p)$ is dense in $X(n,d)$.

\begin{thm}
The following statements are equivalent:
\begin{enumerate}
    \item The universal theory of $\R$ is computable.
    \item There is a computable function $F:\bb N^3\to \bb N$ such that, for every $n,d,k\in \bb N$, $X(n,d,F(n,d,k))$ is $\frac{1}{k}$-dense in $X(n,d)$.
\end{enumerate}
\end{thm}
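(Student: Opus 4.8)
The plan is to prove the two implications separately, using the fact (noted in the excerpt) that $\R$ has a computable presentation together with Proposition on weak effective enumerability.

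For the direction $(2)\Rightarrow(1)$, I would argue that the universal theory of $\R$ is weakly effectively enumerable, which by the cited proposition is equivalent to its computability for structures with a computable presentation. The key observation is that a restricted universal sentence $\sigma = \sup_{\bar x} \theta(\bar x)$ with $\theta$ quantifier-free can, after a harmless rescaling, be taken to involve only variables ranging over the unit ball, and its value in $\R$ is determined (up to arbitrarily small error, controlled by the Lipschitz/uniform-continuity modulus of $\theta$ which is computable from the description of $\sigma$) by the values of the tracial $*$-moments of tuples $\bar a \in \R_1^n$ of degree at most some $d$ computable from $\sigma$. Thus $\sup_{\bar x}\theta(\bar x)^\R = \sup_{\bar v \in X(n,d)} g(\bar v)$ for an explicit computable continuous function $g$ on $D^L$. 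Since $\bigcup_p X(n,d,p)$ is dense in $X(n,d)$, and each $X(n,d,p)$ is the continuous image of the (compact) unit ball of $M_p(\bC)$ under $\mu_{n,d}$ — a map one can approximately compute by brute force over a fine net of matrices — one gets a sequence of lower approximations to $\sigma^\R$ converging from below; combined with $F$ providing an explicit modulus of density, one upgrades these to genuine two-sided rational approximations, giving weak effective enumerability and hence computability.

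For the direction $(1)\Rightarrow(2)$, I would go the other way: assuming $\Th_\forall(\R)$ is computable, I need to produce, uniformly in $n,d,k$, some $p = F(n,d,k)$ with $X(n,d,p)$ being $\tfrac1k$-dense in $X(n,d)$. The idea is that a point of $X(n,d)$ failing to be $\tfrac1k$-approximated from within $X(n,d,p)$ is witnessed by a tuple in $\R_1^n$ whose moment vector is far (in the $D^L$ metric) from the image of $M_p(\bC)_1$, and the quantity "$\sup$ over $\R_1^n$ of the distance from its moment vector to $X(n,d,p)$" is (approximately, via the $(\dagger)$ machinery for the definable set $\R_1^n$ of unitary-ball tuples — or rather for the definable set of all tuples, the unit ball being a domain of quantification) the value of a universal sentence whose truth value computability lets us test. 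For each fixed $p$ one can compute, uniformly, a rational upper bound for this supremum; as $p$ increases these bounds tend to $0$ by density of $\bigcup_p X(n,d,p)$ in the compact set $X(n,d)$, so one searches for the first $p$ whose computed bound drops below, say, $\tfrac{1}{2k}$, and sets $F(n,d,k)$ to be that $p$. This search halts because such $p$ exists, and the whole procedure is effective in $(n,d,k)$.

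\textbf{Main obstacle.} I expect the principal technical difficulty to be the careful bookkeeping in $(1)\Rightarrow(2)$: expressing "distance in $D^L$ from the moment vector of a tuple $\bar x$ to the set $X(n,d,p)$" as (or as approximable by) an actual continuous first-order formula evaluated in $\R$, and checking that the computation of its value can be made uniform in $p$. One must be careful that the set $X(n,d,p)$, while a concrete compact semialgebraic subset of $D^L$, must be encoded so that the resulting formula is restricted (or approximable by restricted formulae with a computable modulus) — this requires approximating the defining data of $X(n,d,p)$ by a computable net and controlling the error, which is routine but needs attention. The role of compactness of $X(n,d)$ (ensuring the lower/upper approximations actually converge, not merely exist) and of the density of $\bigcup_p X(n,d,p)$ must be invoked explicitly to guarantee the search terminates. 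Conversely, in $(2)\Rightarrow(1)$ the only subtlety is reducing an arbitrary restricted universal sentence to one about bounded-degree moments of unit-ball tuples with computable error control, which is standard continuous-logic normalization.
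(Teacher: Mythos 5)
Your proposal is correct in outline, but the two directions compare differently to the paper's proof. For $(2)\Rightarrow(1)$ you do essentially what the paper does: reduce a restricted universal sentence to a computable connective applied to degree-$\le d$ moments, use $F(n,d,2k)$ (with $k$ chosen from the computable modulus of the connective) to know which matrix size suffices, brute-force a fine net of $(M_p(\bb C)_1)^n$, and take the maximum value; this already gives two-sided bounds $r\le\sigma^{\R}\le r+\epsilon$, so your detour through weak effective enumerability and the computable presentation of $\R$ is harmless but unnecessary. For $(1)\Rightarrow(2)$ your route is genuinely different: the paper computes an $\epsilon$-net $\{s_i\}$ of the ambient polydisc $D^L$, queries the computable theory of $\R$ for the value of the existential sentence $\inf_{\vec x}|\mu_{n,d}(\vec x)-s_i|$ at each net point, and for the net points found to lie near $X(n,d)$ runs a per-point search over the matrix algebras $M_p(\bb C)$ (whose values are brute-force computable), finally combining the finitely many sizes found; the only connectives are $|\cdot-s_i|$ with rational data, so no uniformity issues arise. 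You instead fix $p$, encode $s\mapsto d(s,X(n,d,p))$ as a connective --- computable uniformly in $p$ via pushforward nets of matrix balls, exactly as your ``main obstacle'' paragraph identifies; note the $(\dagger)$/definable-set machinery is a red herring here, since the distance is taken in $D^L$ and not in the structure --- query $\Th_\forall(\R)$ once per $p$ for $\sup_{\vec x}d(\mu_{n,d}(\vec x),X(n,d,p))$, and search for the first good $p$. This works and buys a cleaner one-parameter search, at the price of building a more elaborate uniformly computable connective. Two small repairs are needed: termination of the search should be justified by total boundedness of $X(n,d)$ (it sits inside the compact $D^L$; closedness or compactness of $X(n,d)$ itself is neither obvious nor needed), together with the observation that $X(n,d,p)\subseteq X(n,d,p')$ only when $p$ divides $p'$, so to capture finitely many approximating sizes in one $p$ you should pass to their product (or any common multiple) rather than rely on the bounds decreasing ``as $p$ increases''; with the search phrased as ``find the first $p$ whose computed upper bound is below $\frac{1}{2k}$,'' both points are easily accommodated (the paper's own maximum-of-$p_i$ step tacitly needs the same common-multiple fix).
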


\begin{proof}
First suppose that the universal theory of $\R$ is computable.  We produce a computable function $F$ as in (2).  Fix $n$, $d$, and $k$, and set $\epsilon:=\frac{1}{3k}$.  Computably find $s_1,\ldots,s_t$, an $\epsilon$-net in $D^L$.  For each $i=1,\ldots,t$, ask the universal theory of $\R$ to compute intervals $(a_i,b_i)$ with $b_i-a_i<\epsilon$ and with $\left(\inf_{\vec x}|\mu_{n,d}(\vec x)-s_i|\right)^\R\in (a_i,b_i)$.  For each $i=1,\ldots,t$ such that $b_i<2\epsilon$, let $p_i\in \bb N$ be the minimal $p$ such that when you ask the universal theory of $\mathbb M_p(\bb C)$ to compute intervals of shrinking radius containing $\left(\inf_{\vec x}|\mu_{n,d}(\vec x)-s_i|\right)^{M_p(\bb C)}$, there is a computation that returns an interval $(c_i,d_i)$ with $d_i<2\epsilon$.  Let $p$ be the maximum of these $p_i$'s.  We claim that setting $F(n,d,k):=p$ is as desired.  Indeed, suppose that $s\in X(n,d)$ and take $i=1,\ldots,t$ such that $|s-s_i|<\epsilon$.  Then $\left(\inf_{\vec x}|\mu_{n,d} (\vec x)-s_i|\right)^\R<\epsilon$, whence $b_i<2\epsilon$.  It follows that there is an interval $(c_i,d_i)$ as above with $\left(\inf_{\vec x}|\mu_{n,d}(\vec x)-s_i|\right)^{M_p(\bb C)}<d_i<2\epsilon$.  Let $a\in M_p(\bb C)$ realize the infimum.  Then $|\mu_{n,d}(\vec a)-s|<3\epsilon=\frac{1}{k}$, as desired.

Now suppose that $F$ is as in (2).  We show that that the universal theory of $\R$ is computable.  Towards this end, fix a restricted universal sentence 
\[
\sigma = \sup_{\vec x} f(\tau(m_1),\ldots,\tau(m_\ell))
\]
where $\vec x = x_1,\ldots,x_n$ and $m_1,\ldots,m_\ell$ are *-monomials in $\vec x$ of total degree at most $d$.  Fix also rational $\epsilon>0$.  We show how to compute the value of $\sigma^\R$ to within $\epsilon$.  Since $f$ is a restricted connective, it has a computable modulus of continuity $\delta$.  Consequently, we can find $k\in \bb N$ computably so that $\frac{1}{k} \leq \delta(\epsilon)$.  Set $p = F(n,d,2k)$.  Computably construct a sequence $\vec a_1,\ldots,\vec a_t\in (M_p(\mathbb{C})_1)^n$ that is a $\frac{1}{2k}$ cover of $(M_p(\mathbb C)_1)^n$ (with respect to the $\ell^1$ metric corresponding to the 2-norm).  Consequently, $\mu_{n,d}(\vec a_1),\ldots,\mu_{n,d}(\vec a_t)$ is a $\frac{1}{2k}$-cover of $X(n,d,p)$.  Set 
\[
r:=\max_{i=1,\ldots,t}f(\tau(m_1(\vec a_i)),\ldots,\tau(m_l(\vec a_i))).
\]
By assumption, $X(n,d,p)$ is $\frac{1}{2k}$-dense in $X(n,d)$.  It follows that $r\leq \sigma^\R\leq r+\epsilon$, as desired.
\end{proof}



\begin{remark}
Notice that the *-monomials used in the proof of Theorem \ref{main} are of very low degree (at most 4) and so we have the stronger result that there is no computable function of the form $F(n,4,k)$ in the theorem above.
\end{remark}

\section{A general perspective on Embedding Problems}

Recall that a structure $N$ embeds into an ultrapower of another structure $M$ (in the same language) if and only if $N$ is a model of the universal theory of $M$.  All of the embedding problems in operator algebras attempt to find a small subset of the universal theory of some canonical object so that modeling that small subset suffices to conclude that one models the entire universal theory.  For example, the Connes Embedding Problem asks whether or not modeling the theory of tracial von Neumann algebras (which is a subset of the universal theory of $\cal R$) is enough to know that one models the entire universal theory of $\cal R$.  Similarly, the MF Problem asks whether or not modeling the theory of stably finite C*-algebras (which, again, is part of the universal theory of $\cal Q$) is enough to know that one models the entire universal theory of $\cal Q$.

Now that we know that the Connes Embedding Problem is false, it is natural to ask whether or not one can ``reasonably'' enlarge the theory of tracial von Neumann algebras in such a way that then modeling that enlarged theory does indeed imply that you model the entire universal theory of $\cal R$.  We show that, under one interpretation of ``reasonable,'' this is impossible.  We first offer the following general definition:

\begin{defn}
Given a structure $M$ in a language $L$, we call the \textbf{$M$EP} the statement that there is an effectively enumerable subset $T$ of the \emph{full} theory of $M$ such that, for any $L$-structure $N$, if $N\models T$, then $N$ embeds into an ultrapower of $M$.
\end{defn}

Note that this definition allows the possibility that the extra information being allowed need not be universal information, but rather can have arbitrary quantifier-complexity.  On the other hand, the restriction that $T$ be effectively enumerable is somewhat severe (although natural from the logical point of view).  


We have the following general statement:

\begin{thm}\label{EP}
If the $M$EP has a positive solution, then the universal theory of $M$ is weakly effectively enumerable.
\end{thm}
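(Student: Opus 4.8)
The plan is to combine the positive solution of the $M$EP with the Completeness Theorem (the \textbf{Fact} quoted above) to manufacture the required enumeration. Suppose $T$ is an effectively enumerable subset of the full theory of $M$ witnessing the $M$EP. Since the proof system $\vdash$ of continuous logic has the property that the set of $\sigma$ with $T\vdash\sigma$ is effectively enumerable whenever $T$ is, I would first observe that we may begin effectively enumerating all pairs $(\sigma,r)$ with $\sigma$ a restricted universal sentence, $r\in\bb Q^{>0}$, and $T\vdash\sigma\dminus r$. The claim is that this enumeration is exactly (or at least cofinal in, which is all that matters) a witness to weak effective enumerability of $\Th_\forall(M)$, i.e.\ that $T\vdash\sigma\dminus r$ if and only if — or at least only when it is compatible with — $\sigma^M\le r$.

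The key step is to identify $\sup\{\sigma^N : N\models T\}$ with $\sigma^M$ for restricted \emph{universal} $\sigma$. The inequality $\sup\{\sigma^N : N\models T\}\ge\sigma^M$ is immediate since $M\models T$ (as $T$ is a subset of the full theory of $M$). For the reverse inequality, let $N\models T$; by the positive solution of the $M$EP, $N$ embeds into some ultrapower $M^{\u}$. Since $\sigma$ is universal, its value cannot increase when passing to a substructure: $\sigma^N\le\sigma^{M^\u}$. And $\sigma^{M^\u}=\sigma^M$ because ultrapowers preserve the values of all sentences (\L o\'s's theorem for continuous logic). Hence $\sigma^N\le\sigma^M$ for every model $N$ of $T$, giving $\sup\{\sigma^N : N\models T\}\le\sigma^M$. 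Combining, $\sup\{\sigma^N : N\models T\}=\sigma^M$.

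Now I invoke the \textbf{Fact} (the Completeness Theorem) with this $T$ and this restricted universal $\sigma$:
\[
\sigma^M=\sup\{\sigma^N : N\models T\}=\inf\{r\in\bb Q^{>0} : T\vdash\sigma\dminus r\}.
\]
Thus if $\sigma^M\le r_0$ then for every rational $r>r_0$ we have $T\vdash\sigma\dminus r$ (the infimum being $\le r_0<r$), and conversely every $r$ with $T\vdash\sigma\dminus r$ satisfies $r\ge\sigma^M$, so $T\vdash\sigma\dminus r$ forces $\sigma^M\le r$. Therefore the effectively enumerable list of pairs $(\sigma,r)$ with $T\vdash\sigma\dminus r$ is precisely a list of pairs with $\sigma^M\le r$, and it is cofinal: for every restricted universal $\sigma$ and every $r>\sigma^M$ the pair $(\sigma,r)$ eventually appears. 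This is exactly the definition of $\Th_\forall(M)$ being weakly effectively enumerable, completing the proof.

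I expect the only genuine subtlety — the "main obstacle" — to be the bookkeeping that the sentences produced by the proof system and by the enumeration are genuinely restricted (so that the syntactic hypotheses of the \textbf{Fact} and of the definition of weak effective enumerability are met), together with checking that universality of $\sigma$ is preserved under the operations used; but since $T$ is allowed arbitrary quantifier-complexity while we only ever feed \emph{universal} $\sigma$ into the Completeness Theorem, the substructure-monotonicity argument in the second paragraph goes through unchanged, and nothing more delicate is needed.
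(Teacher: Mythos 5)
Your proposal is correct and follows essentially the same route as the paper: identify $\sigma^M$ with $\sup\{\sigma^N : N\models T\}$ via $M\models T$, the embedding into an ultrapower, monotonicity of universal sentences under substructures and \L o\'s's theorem, then apply the Completeness Theorem and enumerate the theorems $\sigma\dminus r$ of $T$. You merely spell out (a bit more explicitly than the paper does) the soundness/cofinality bookkeeping by which this enumeration witnesses weak effective enumerability, which is fine.
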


\begin{proof}
Suppose that there is a effectively enumerable subset $T$ of the theory of $M$ such that whenever $N\models T$, then $N$ embeds into an ultrapower of $M$.  It follows that, for any universal sentence $\sigma$, we have, using the Completeness Theorem, that
$$\sigma^M=\sup\{\sigma^N \ : \ N\models T\}=\inf\{r\in \mathbb Q^{>0} \ : \ T\vdash \sigma\dminus r\}.$$ The result now follows.
\end{proof}


Recalling that weak effective enumerability is equivalent to computability for the universal theory of $\R$, we now have the following strengthening of the fact that CEP has a negative solution:

\begin{cor}\label{REP}
$\cal R$EP has a negative solution.
\end{cor}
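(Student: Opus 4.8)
The plan is to chain together two facts already established in the excerpt. First, Theorem~\ref{main} (combined with its Corollary) tells us that $\Th_\forall(\R)$ is \emph{not} computable: if it were, then the synchronous value of every computable game would be computable uniformly, and feeding in the games $\mathfrak{G}_{\cal M}$ from Theorem~\ref{main-MIP*} would let us decide the halting problem. Second, the Proposition just before Section~3 states that, because $\R$ is separable and has a computable presentation (as noted in \cite{GH,GH2}), $\Th_\forall(\R)$ is computable \emph{if and only if} it is weakly effectively enumerable. Combining these, $\Th_\forall(\R)$ is not weakly effectively enumerable.

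Now I would argue by contraposition using Theorem~\ref{EP}. Suppose, toward a contradiction, that $\R$EP has a positive solution, i.e.\ there is an effectively enumerable subset $T$ of the full theory of $\R$ such that every model of $T$ embeds into an ultrapower of $\R$. By Theorem~\ref{EP}, the universal theory of $\R$ would then be weakly effectively enumerable. This contradicts the previous paragraph, so no such $T$ can exist; that is, $\R$EP has a negative solution.

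There is essentially no obstacle here beyond correctly assembling the pieces: the only subtle point is making sure the equivalence "weakly effectively enumerable $\Leftrightarrow$ computable" genuinely applies to $\R$, which it does because $\R$ admits a computable presentation. It is also worth remarking that this recovers (and strictly strengthens) the statement that the Connes Embedding Problem has a negative solution: CEP is precisely the instance of $\R$EP in which $T$ is taken to be the axioms of tracial von Neumann algebras, which is certainly an effectively enumerable subset of $\Th(\R)$, so a positive solution to CEP would be a positive solution to $\R$EP. Hence the negative solution to $\R$EP implies the negative solution to CEP, but additionally rules out \emph{any} effectively enumerable enlargement of that theory (even one using sentences of arbitrary quantifier complexity) doing the job.
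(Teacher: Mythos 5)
Your proof is correct and follows the paper's own argument exactly: combine the non-computability of $\Th_\forall(\R)$ (from Theorem \ref{main} and its corollary) with the equivalence between computability and weak effective enumerability for structures with a computable presentation, and then apply Theorem \ref{EP} in contrapositive form. The closing remark relating $\R$EP to CEP also matches the paper's framing, so there is nothing to add.
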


\begin{remark}
In the case of the $\R$EP, we can make an even stronger statement, namely that there is no effectively enumerable theory $T$ extending the theory of II$_1$ factors with the property that every model of $T$ embeds into an ultrapower of $\R$.  Note that we are not requiring that $\R$ itself be a model of $T$, but instead require that every model of $T$ be a II$_1$ factor.  Indeed, since every II$_1$ factor contains a copy of $\R$, the proof of Theorem \ref{EP} goes through and we obtain this stronger statement.
\end{remark}




As stated in the introduction, Corollary \ref{REP} allows us to provide ``many'' counterexamples to CEP:

\begin{cor}
There is a sequence $M_1,M_2,\ldots,$ of separable II$_1$ factors, none of which embed into an ultrapower of $\cal R$, and such that, for all $i<j$, $M_i$ does not embed into an ultrapower of $M_j$.
\end{cor}

\begin{proof}
We construct the sequence inductively.  Set $M_1$ to be any separable II$_1$ factor that does not embed into an ultrapower of $\cal R$.  Suppose now that $M_1,\ldots,M_n$ have been constructed satisfying the conclusion of the Corollary.  For each $i=1,\ldots,n$, let $\sigma_i$ be a nonnegative restricted sentence such that $\sigma_i^{\cal R}=0$ but $\sigma_i^{M_i}>0$.  For each $i=1,\ldots,n$, fix a rational number $\delta_i\in (0,\sigma_i^{M_i})$.  Let $T$ be the theory of II$_1$ factors together with the single condition $\max_{i=1,\ldots,n}(\sigma_i\dminus \delta_i)=0$.  It is clear that $T$ is a recursively enumerable subset of the theory of $\cal R$.  Thus, by Corollary \ref{REP}, there is a separable model $M_{n+1}$ of $T$ such that $M_{n+1}$ does not embed into an ultrapower of $\cal R$.  Given $i=1,\ldots,n$, since $\sigma_i^{M_i}>\delta_i$ while $\sigma_i^{M_{n+1}}\leq \delta_i$, it follows that $M_i$ does not embed into an ultrapower of $M_{n+1}$.  This indicates how to continue the recursive construction, completing the proof.
\end{proof}

Recall that a tracial von Neumann algebra $\cal S$ is called \textbf{locally universal} if every tracial von Neumann algebra embeds into an ultrapower of $\cal S$.  As shown in \cite{mtoa3}, there is a locally universal tracial von Neumann algebra and it is clear that they all have the same universal theory.  Since the theory of tracial von Neumann algebras is recursively axiomatizable, we arrive at the following result:

\begin{thm}
If $\cal S$ is a locally universal tracial von Neumann algebra, then the $\cal S$EP has a positive solution.
\end{thm}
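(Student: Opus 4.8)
The plan is to take $T$ to be the (recursively axiomatizable) theory of tracial von Neumann algebras itself, and verify that it witnesses the $\cal S$EP for any locally universal $\cal S$. First I would recall the elementary model-theoretic fact quoted in the excerpt: a structure $N$ embeds into an ultrapower of $M$ if and only if $N$ is a model of the universal theory of $M$. Applying this with $M=\cal S$, the condition ``$N$ embeds into an ultrapower of $\cal S$'' is exactly ``$N\models \Th_\forall(\cal S)$''. So it suffices to produce an effectively enumerable $T\subseteq\Th(\cal S)$ such that every model of $T$ satisfies $\Th_\forall(\cal S)$.

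The key observation is that $\Th_\forall(\cal S)$ is precisely the universal theory of tracial von Neumann algebras, i.e.\ the set of universal sentences provable from (equivalently, true in all models of) the axioms of tracial von Neumann algebras. Indeed, by local universality every tracial von Neumann algebra embeds into $\cal S^{\u}$, hence models $\Th_\forall(\cal S)$; conversely $\cal S$ is itself a tracial von Neumann algebra, so $\Th_\forall(\cal S)$ contains the universal consequences of that theory. Thus if $T$ denotes the axioms of tracial von Neumann algebras (or any effectively enumerable set of sentences all true in $\cal S$ whose models are exactly the tracial von Neumann algebras), then every $N\models T$ is a tracial von Neumann algebra, hence embeds into $\cal S^{\u}$, as required. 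One also needs $T\subseteq\Th(\cal S)$ in the sense of the definition, i.e.\ that all sentences in $T$ have value $0$ in $\cal S$; this holds because $\cal S$ is a tracial von Neumann algebra and the axioms are taken in the form $\sigma$ with $\sigma^N=0$ characterizing the class.

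The remaining point is effectiveness: I would invoke the fact, cited in the excerpt (``the theory of tracial von Neumann algebras is recursively axiomatizable''), that the axioms for tracial von Neumann algebras form a recursively enumerable set of restricted $L$-sentences. This is exactly the hypothesis needed for $T$ to be an effectively enumerable subset of $\Th(\cal S)$ in the sense of the $\cal S$EP definition. Putting these together gives a positive solution to the $\cal S$EP.

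The only genuine subtlety — and the step I would be most careful about — is matching conventions: the definition of $M$EP asks for $T$ inside the \emph{full} theory $\{\sigma : \sigma^M = 0\}$, so one must phrase the axioms of tracial von Neumann algebras as conditions of the form $\sigma^N=0$ (which is the standard continuous-logic formulation, e.g.\ the trace identity, the $C^*$/von Neumann axioms, and the condition forcing a faithful normal trace), and confirm that the class of models of this $T$ is exactly the class of tracial von Neumann algebras, no more. Once that bookkeeping is in place, the argument is immediate from local universality together with the ultrapower-embedding criterion. I do not expect any analytic difficulty here; it is purely a matter of assembling facts already recorded in the excerpt.
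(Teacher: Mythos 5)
Your proposal is correct and is essentially the paper's own argument: take $T$ to be the recursively axiomatizable theory of tracial von Neumann algebras, note $T\subseteq\Th(\cal S)$ since $\cal S$ is itself a tracial von Neumann algebra, and conclude from local universality that every model of $T$ embeds into an ultrapower of $\cal S$. The detour through $\Th_\forall(\cal S)$ is harmless but unnecessary, since local universality gives the required embedding directly.
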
  

The same remark can be made for locally universal C*-algebras.

\section{An application to C*-algebras}

Recall that the \textbf{MF problem}, first posed by Blackadar and Kirchberg, asks whether or not every stably finite \cstar-algebra embeds into an ultrapower of the universal UHF algebra $\cal Q$.  The following consequence of the failure of CEP was pointed out to us by Thomas Sinclair and Aaron Tikuisis:

\begin{prop}
The MF problem has a negative solution.
\end{prop}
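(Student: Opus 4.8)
The plan is to reduce the MF problem to the failure of the Connes Embedding Problem by passing through tracial von Neumann algebras. The key structural fact is that the universal UHF algebra $\cal Q$ is an MF algebra whose finite-dimensional approximating structure carries a canonical trace, so that a $\mathrm{II}_1$-factor that would witness the failure of CEP can be ``lifted'' to a stably finite \cstar-algebra that embeds into an ultrapower of $\cal Q$ only if CEP holds.

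First I would recall that $\cal Q = \bigotimes_{n} M_n(\bb C)$ carries a unique tracial state $\tau$, and that its tracial von Neumann algebra completion $\pi_\tau(\cal Q)''$ is $\cal R$; consequently any embedding $\cal A \hookrightarrow \cal Q^{\u}$ of a \cstar-algebra with a distinguished trace induces, after GNS, an embedding of tracial von Neumann algebras into $\cal R^{\u}$. Second, take any tracial von Neumann algebra $\M$ that does not embed into $\cal R^{\u}$; such an $\M$ exists precisely because CEP fails (which we know from the Corollary following Theorem~\ref{EP}, or directly from the main theorem via \cite{MIP*} and \cite{GH}). Third, I would exhibit a stably finite \cstar-algebra built from $\M$ that, by construction, does embed into $\cal Q^{\u}$ if the MF problem has a positive solution: the natural candidate is a \cstar-algebra $\cal A \subseteq \M$ generated by a weakly dense, separable, self-adjoint family of elements, or more robustly the \emph{continuous} version of $\M$ as a tracial \cstar-algebra — one checks $\cal A$ is stably finite since it sits inside a finite von Neumann algebra and finiteness passes to \cstar-subalgebras and matrix amplifications (using that $M_k(\M)$ is again finite). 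Finally, a positive solution to the MF problem would give $\cal A \hookrightarrow \cal Q^{\u}$; this embedding is trace-preserving for an appropriate choice of traces (the limit trace on $\cal Q^{\u}$ restricts to the trace on $\cal A$), hence induces $\M = \cal A'' \hookrightarrow (\cal Q^{\u})'' = \cal R^{\u}$ (using that the von Neumann closure of $\cal Q^{\u}$ with respect to the limit trace is $\cal R^{\u}$, by commutation of GNS with ultraproducts in the tracial setting), contradicting the choice of $\M$.

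The step I expect to be the main obstacle is the trace-matching and the identification $(\cal Q^{\u})'' \cong \cal R^{\u}$: one must be careful that the \cstar-ultrapower $\cal Q^{\u}$ of \cite{K}-style norm ultrapowers interacts correctly with the \emph{tracial} (von Neumann) ultrapower, i.e.\ that tracking the unique trace through the ultrapower construction yields exactly $\cal R^{\u}$ rather than some larger algebra, and that the embedding $\cal A \hookrightarrow \cal Q^{\u}$ produced by the MF hypothesis can be arranged (or automatically is, by uniqueness of traces on matrix algebras) to respect traces. This is presumably exactly the content of the Blackadar--Kirchberg-style argument alluded to by Sinclair and Tikuisis, and once it is in place the rest is a short contradiction argument.
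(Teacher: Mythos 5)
Your overall strategy (reduce the MF problem to the failure of CEP by pushing an embedding into $\cal Q^\u$ down to $\R^\u$ via the trace) is the same as the paper's, but as written it has a genuine gap at exactly the point you flag: trace-matching. The MF hypothesis only hands you a \cstar-algebra embedding $\cal A\hookrightarrow \cal Q^\u$, and nothing forces the limit trace on $\cal Q^\u$ to restrict on $\cal A$ to the trace you started with (the one coming from the weakly dense inclusion $\cal A\subseteq \M$). A separable weakly dense \cstar-subalgebra of a II$_1$ factor typically has a huge trace simplex, and the GNS/von Neumann completion of $\cal A$ with respect to the \emph{restricted} limit trace can be an entirely different tracial von Neumann algebra; so the argument only shows that \emph{that} completion embeds into $\R^\u$, which gives no contradiction with $\M\not\hookrightarrow\R^\u$. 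Your parenthetical appeal to ``uniqueness of traces on matrix algebras'' does not address this, since the relevant trace ambiguity lives on $\cal A$, not on $\cal Q$ or on matrix algebras, and there is no Blackadar--Kirchberg-style lemma that repairs it in this generality. (The identification of the trace-kernel quotient of $\cal Q^\u$ with $\R^\u$, by contrast, is fine.)

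The fix is the paper's proof: don't pass to a dense \cstar-subalgebra at all. Take the CEP counterexample $M$ to be a (separable) II$_1$ factor that does not embed into $\R^\u$, and note that $M$ itself is a stably finite \cstar-algebra. A positive solution to MF would give a \cstar-embedding $M\hookrightarrow \cal Q^\u$; composing with the quotient map $\cal Q^\u\to\R^\u$ yields a unital $*$-homomorphism $M\to\R^\u$, which is injective because $M$ is simple, and automatically trace-preserving because $M$ has a unique tracial state --- so it is a genuine embedding in the sense of CEP, a contradiction. Simplicity and monotraciality of the II$_1$ factor are precisely what make the trace bookkeeping (your ``main obstacle'') disappear, and this is why the choice of $M$, rather than an arbitrary tracial von Neumann algebra with a dense subalgebra, is essential. (The resulting counterexample is nonseparable as a \cstar-algebra; as the paper remarks, a separable one is then obtained by taking a separable elementary substructure.)
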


\begin{proof}
Suppose that $M$ is a II$_1$ factor that does not embed into $\R^\u$ (here we mean the tracial ultrapower).  We claim then that $M$ does not embed (as a C*-algebra) into a nonprincipal C*-ultrapower of $\cal Q$.  Indeed, suppose, towards a contradiction, that $i:M\hookrightarrow \cal Q^\u$ is such an embedding.  
Let $\pi:\mathcal Q^\u \to \mathcal R^\u$ denote the composition of the quotient map $\cal Q^\u\to \cal Q^\u/I$, where $I$ is the trace ideal, with the natural inclusion $\cal Q^\u/I\hookrightarrow \R^\u$.  Since $M$ has a unique trace, which is faithful, we get that the composition  $\pi\circ i:M\to \mathcal R^{\u}$ is an embedding and a *-homomorphism, yielding a contradiction.
\end{proof}

\begin{remark}
The counterexample to the MF problem in the proof above is not separable.  One can easily obtain a separable counterexample, e.g. by taking a separable elementary substructure in the language of \cstar-algebras.
\end{remark}

In this section, we improve upon this result by showing that the $\cal Q$EP has a negative solution.  This result will follow from a more general result applying to a wider class of C*-algebras.

\begin{defn}
Given $m\in \bb N$ and $0<\gamma<1$, we say that a unital \cstar-algebra $A$ has the \textbf{$(m,\gamma)$-uniform Dixmier property} if, for all self-adjoint $a\in A$, there are unitaries $u_1,\ldots,u_m\in U(A)$ and $z\in Z(A)$ such that
$$\left\|\sum_{i=1}^m \frac{1}{m}u_iau_i^*-z\right\|\leq \gamma\|a\|.$$ We say that $A$ has the \textbf{uniform Dixmier property} if it has the $(m,\gamma)$-Dixmier property for some $m$ and $\gamma$.  
\end{defn}

Clearly if $A$ has the $(m,\gamma)$-Dixmier property, then it has the $(m,\gamma')$-Dixmier property for any $0<\gamma<\gamma'<1$, whence we may always assume that $\gamma$ is dyadic rational.

Given $m$ and $\gamma$, let $\theta_{m,\gamma}$ denote the following sentence in the language of \cstar-algebras:
$$\sup_a\inf_{u_1,\ldots,u_n}\inf_\lambda\max\left(\max_{i=1,\ldots,n}\|u_iu_i^*-1\|,\|\sum_{i=1}^m\frac{1}{m}u_iau_i^*-\lambda\|\dminus \gamma\|a\|\right).$$
Here, the supremum is over self-adjoint contractions, the first infimum is over contractions, and the second infimum is over the unit disk in $\bb C$.  If $A$ is a simple unital \cstar-algebra with the $(m,\gamma)$-uniform Dixmier property, then $\theta_{m,\gamma}^A=0$.  Conversely, if $\theta_{\gamma,m}^B=0$, then $B$ is monotracial.

Given a tracial C*-algebra $(A,\tau_A)$, one lets $N_{(A,\tau_A)}$ denote the weak closure of $A$ in the GNS representation corresponding to $\tau_A$.  It is known that $N_{(A,\tau_A)}$ is isomorphic to the algebra obtained from taking the $\|\cdot\|_{2,\tau_A}$-completion of each bounded ball of $A$.  

We are now ready to prove our main theorem of this section:

\begin{thm}
Suppose that $A$ is an infinite-dimensional, unital, simple C*-algebra with the uniform Dixmier property and such that $N_{(A,\tau_A)}$ embeds into an ultrapower of $\cal R$.  Then the $A$EP has a negative solution.
\end{thm}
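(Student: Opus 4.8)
The plan is to reduce the $A$EP to the $\R$EP, which we already know has a negative solution (the Corollary in Section 5). Suppose, toward a contradiction, that the $A$EP has a positive solution, witnessed by an effectively enumerable subset $T$ of the full theory of $A$ in the language of C*-algebras such that every C*-algebra modeling $T$ embeds into an ultrapower of $A$. The idea is to push $T$ through the GNS construction to produce an effectively enumerable theory in the language of tracial von Neumann algebras witnessing a positive solution to the $N_{(A,\tau_A)}$EP; since $N_{(A,\tau_A)}$ embeds into $\R^\u$ and $\R$ embeds into $N_{(A,\tau_A)}^\u$ (by local universality considerations, or more simply since $N_{(A,\tau_A)}$ and $\R$ have the same universal theory once we know $N_{(A,\tau_A)} \hookrightarrow \R^\u$ — but we actually need the reverse embedding too), this contradicts the negative solution of the $\R$EP.

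First I would record why the hypotheses force $N_{(A,\tau_A)}$ to be $\R$ itself (or at least to have the same universal theory, and more). The uniform Dixmier property implies $A$ is monotracial (as noted in the excerpt, $\theta_{m,\gamma}^B = 0$ forces $B$ monotracial), so $\tau_A$ is the unique trace and $N := N_{(A,\tau_A)}$ is a II$_1$ factor (infinite-dimensionality of $A$ plus simplicity rule out the finite-dimensional case). A II$_1$ factor that embeds into $\R^\u$ and into which $\R$ embeds — and $\R$ always embeds into any II$_1$ factor — has the same universal theory as $\R$. So it suffices to show: a positive solution to the $A$EP yields an effectively enumerable theory $T'$ extending the theory of II$_1$ factors, every model of which is a II$_1$ factor embedding into $N^\u = \R^\u$; then invoke the strengthened $\R$EP remark (the one not requiring $\R \models T'$, only that every model of $T'$ be a II$_1$ factor — but here I'd want every model to embed into $\R^\u$, which is exactly what we get).

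The key construction is the translation between $T$ and $T'$. Given the effectively enumerable C*-theory $T$ of $A$, I would let $T'$ be the (effectively enumerable) theory in the tracial von Neumann algebra language consisting of: (i) the axioms for II$_1$ factors; (ii) for each sentence $\sigma \in T$, a tracial-language sentence $\sigma'$ expressing that the trace-norm-GNS closure satisfies $\sigma$ — concretely, one replaces the C*-norm $\|\cdot\|$ appearing in $\sigma$ by suprema over the bounded ball computed in $\|\cdot\|_2$, using that $N_{(A,\tau_A)}$ is the $\|\cdot\|_{2,\tau_A}$-completion of each ball of $A$, as stated in the excerpt; and (iii) the uniform Dixmier sentence $\theta_{m,\gamma}$ (or rather a tracial-language version expressing the consequence that the GNS closure is a factor, which is already in (i)). Then if $P \models T'$, I claim $P$ is a II$_1$ factor whose underlying C*-structure — more precisely, a suitable tracial C*-algebra with dense image in $P$ — models $T$, hence embeds into $A^\u$; applying $N_{(\cdot)}$ and using that $N_{(A^\u, \tau)} = N^\u$ embeds into $\R^\u$, we get $P \hookrightarrow \R^\u$. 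The main obstacle I expect is making step (ii) precise: encoding "the GNS von Neumann closure of $A$ satisfies the C*-sentence $\sigma$" as a genuine tracial-language sentence, and conversely extracting from a model $P$ of $T'$ an honest tracial C*-algebra modeling $T$ (one wants a weakly dense C*-subalgebra on which the C*-norm is recovered and which satisfies $T$) — this is where the relationship between operator-norm formulas and $\|\cdot\|_2$-formulas, and the effectiveness of the translation, has to be handled with care; the uniform Dixmier property is presumably exactly what is needed to control the C*-norm from tracial data (via $\|a\| = \lim_k \|a^{2^k}\|_2^{1/2^k}$ for positive $a$ together with simplicity, or via the Dixmier averaging estimate) and thereby make the norm "first-order over the trace."
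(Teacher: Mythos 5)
There is a genuine gap, and it sits exactly where you flagged it: step (ii), the two-way translation between the \cstar-language theory $T$ and a tracial-von Neumann-language theory $T'$. This translation is not merely delicate, it is unavailable. The operator norm is not a formula of continuous logic over the tracial structure: on bounded balls it is only a non-uniform limit of tracial data (your $\|a\|=\lim_k\|a^{2^k}\|_{2}^{1/2^k}$ is a pointwise limit with no uniform modulus), so ``replace $\|\cdot\|$ by suprema computed in $\|\cdot\|_2$'' does not produce genuine restricted sentences, let alone effectively. Worse, the target of the translation is wrong: the GNS closure $N_{(A,\tau_A)}$ need not satisfy the \cstar-sentences in $T$ at all (e.g.\ $\mathcal Z$ is projectionless while its GNS closure $\R$ is generated by projections), so ``$N$ satisfies $\sigma$'' for $\sigma\in T$ is not even the statement one wants to encode. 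And in the converse direction there is no first-order, effective way to extract from an abstract model $P$ of $T'$ a distinguished weakly dense \cstar-subalgebra on which the operator norm is recovered and which models $T$. Since your whole reduction to the (strengthened) $\R$EP passes through this $T'$, the argument as proposed does not go through.

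The paper's proof avoids any such translation. It works in the language of \emph{tracial} \cstar-algebras and takes $T$ to be $T_0$ together with the axioms for tracial \cstar-algebras and the single sentence $\theta_{m,\gamma}=0$; this is effectively enumerable and has $(A,\tau_A)$ as a model. Any model $(M,\tau_M)$ embeds into $A^\u$ as a \cstar-algebra by the assumed $A$EP, and $\theta_{m,\gamma}=0$ forces $M$ to be monotracial, so the embedding is automatically trace-preserving (this is the only role of the uniform Dixmier property, which you did correctly anticipate). Then, by the Completeness Theorem, running proofs from $T$ gives upper approximations to $\sigma^{(A,\tau_A)}$ for every universal sentence $\sigma$ of the tracial von Neumann language, construed as a tracial \cstar-sentence. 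The only passage from $A$ to $N_{(A,\tau_A)}$ needed is for these \emph{universal} sentences, which involve nothing but traces of $*$-polynomials over the ball; since the ball of $A$ is $\|\cdot\|_{2,\tau_A}$-dense in that of $N_{(A,\tau_A)}$, one gets $\sigma^{(A,\tau_A)}=\sigma^{(N,\tau_N)}=\sigma^{\R}$ (the last equality because $N$ is a II$_1$ factor embeddable in $\R^\u$ and containing a copy of $\R$ — the one part of your outline that is correct and is used verbatim). This shows $\Th_\forall(\R)$ is weakly effectively enumerable, hence computable, contradicting the main theorem. So the repair of your proposal is not to make the translation precise but to abandon it: keep $T_0$ in the (tracial) \cstar-language and let only universal tracial sentences cross from $A$ to its GNS closure, where they cross for free.
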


\begin{proof}
Suppose, towards a contradiction that the $A$EP has a positive solution as witnessed by the theory $T_0$.  Fix $m$ and $\gamma$ with $\gamma$ a dyadic rational such that $A$ has the $(m,\gamma)$-uniform Dixmier property.  Let $L$ be the language of tracial \cstar-algebras and let $T$ be the union of the following three $L$-theories:
\begin{itemize}
    \item $T_0$;
    \item the $L$-theory of tracial \cstar-algebras;
    \item the single condition $\theta_{m,\gamma}=0$.
\end{itemize}  
Note that $T$ is recursively axiomatizable.  Clearly $(A,\tau_A)\models T$.  Now suppose that $(M,\tau_M)\models T$.  Since $M\models T_0$, there is an embedding $M\hookrightarrow A^\u$.  Since $M\models \sigma_{m,\gamma}=0$, $M$ is monotracial, whence this embedding is trace preserving, that is, we have an embedding $(M,\tau_M)\hookrightarrow (A,\tau_A)^\u$.  Consequently, for any universal $L$-sentence $\sigma$, we have that $$\sigma^{(A,\tau_A)}=\sup\{\sigma^{(M,\tau_M)} \ : \ (M,\tau_M)\models T\}=\inf\{r\in \bb Q^{>0} \ : \ T\vdash \sigma\dminus r\},$$  where the second equality follows from the Completeness Theorem.  Thus, by running proofs from $T$, we obtain approximations from above to the value of $\sigma^{(A,\tau_A)}$.  If $\sigma$ is a sentence in the language of tracial von Neumann algebras, then $\sigma$ can be construed in the language of tracial \cstar-algebras.  In this case, set $N:=N_{(A,\tau_A)}$.  Since $A$ is monotracial, $N$ is a II$_1$ factor; call its trace $\tau_N$. Note that since $A$ is simple, $\tau_A$ is faithful and so $A$ embeds into $N$.  It follows that we have $\sigma^{(A,\tau_A)}=\sigma^{(N,\tau_N)}$.  Since $N$ is a II$_1$ factor that embeds into an ultrapower of $\cal R$, we have that $\sigma^{(N,\tau_N)}=\sigma^{(\cal R,\tau_\cal R)}$.  We thus have that the universal theory of $\R$ is weakly effectively enumerable, which is a contradiction.
\end{proof}

We remind the reader of a theorem of Haagerup and Zsidó \cite{hz}, namely that a simple unital \cstar-algebra has the Dixmier property if and only if it is monotracial.  In particular, $\cal Q$ and $\cal Z$ have the Dixmier property.  Relevant for our discussion is the following:

\begin{fact}
$\cal Q$ and $\cal Z$ have the uniform Dixmier property.
\end{fact}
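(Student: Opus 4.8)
The plan is to reduce the statement, for each of $\cal Q$ and $\cal Z$, to a dimension-free averaging estimate on finite-dimensional building blocks and then transfer it through the relevant inductive limit. Since $\cal Q$ and $\cal Z$ are simple, their centres equal $\bb C1$, so in the definition of the uniform Dixmier property one must take $z=\tau(a)1$. Replacing $a$ by $b:=a-\tau(a)1$, which is self-adjoint with $\tau(b)=0$ and $\|b\|\le 2\|a\|$, and using that $\frac1m\sum_iu_iau_i^*-\tau(a)1=\frac1m\sum_iu_ibu_i^*$, it suffices to produce, for the algebra in question, absolute constants $m\in\bb N$ and $\gamma_0<1$ such that every trace-zero self-adjoint $b$ admits unitaries $u_1,\dots,u_m$ with $\bigl\|\tfrac1m\sum_iu_ibu_i^*\bigr\|\le\gamma_0\|b\|$. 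Indeed, the passage to $b$ costs only the factor $\|b\|\le 2\|a\|$, and since $\frac1m\sum_iu_ibu_i^*$ is again trace-zero self-adjoint this estimate iterates: after $k$ rounds it becomes an average of $m^k$ conjugates with constant $\gamma_0^k$, which for large $k$ more than compensates for that factor. Call this the \emph{uniform averaging property}.

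For $\cal Q$, write $\cal Q=\overline{\bigcup_N M_{k_N}}$ with unital matrix subalgebras. Given a trace-zero self-adjoint $b\in\cal Q$ and $\varepsilon>0$, pick $N$ and a self-adjoint $b'\in M_{k_N}$ with $\|b-(b'-\tau(b')1)\|<\varepsilon$, so that $b'-\tau(b')1$ is trace-zero in $M_{k_N}$. The crux is then the \textbf{dimension-free matrix averaging lemma}: there are absolute constants $m$ and $\gamma_0<1$ so that every trace-zero self-adjoint $c\in M_k$, for any $k$, admits unitaries $v_1,\dots,v_m\in M_k$ with $\bigl\|\tfrac1m\sum_iv_icv_i^*\bigr\|\le\gamma_0\|c\|$; applying this to $b'-\tau(b')1$ inside $M_{k_N}\subseteq\cal Q$ and letting $\varepsilon\to 0$ gives the uniform averaging property for $\cal Q$. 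This lemma is classical --- it is the finite-dimensional core of the Dixmier property for UHF algebras --- and the transparent model case is the two-valued one, $c=s\,p-t\,q$ with $p,q$ complementary projections and $s\,\tau(p)=t\,\tau(q)$: here $\tfrac1m\sum_iu_icu_i^*=(s+t)P-t$ with $P=\tfrac1m\sum_iu_ipu_i^*$ a positive contraction of trace $\theta:=\tfrac{t}{s+t}$, and one chooses the conjugates so that $P$ lies within $\tfrac12$ of the scalar $\theta\,1$ --- a finite-frame-type construction available for every $\theta$ with $m$ bounded --- whence $\|(s+t)P-t\|\le\gamma_0\|c\|$ with $\gamma_0<1$ absolute; the general case requires more elaborate but similar choices. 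The main obstacle is precisely the uniformity in $k$: the obvious candidates --- cyclic shifts in an eigenbasis, a single rotation, ``free'' conjugates --- all degrade to $\gamma_0\to1$ as $k\to\infty$ on suitable families of $c$'s, so the conjugates must genuinely exploit the spectral geometry.

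For $\cal Z$ one cannot quote the matrix lemma verbatim, since $\cal Z$ is projectionless and contains no unital matrix subalgebra. One route is to write $\cal Z=\varinjlim(Z_{p_n,q_n},\varphi_n)$ as an inductive limit of prime dimension-drop algebras $Z_{p,q}=\{f\in C([0,1],M_p\otimes M_q):f(0)\in M_p\otimes1,\ f(1)\in1\otimes M_q\}$ and perform the averaging \emph{fibrewise}, applying the matrix lemma to $a(t)\in M_{pq}$ for $t\in(0,1)$ and, at the endpoints, to $a(0)\in M_p$ and $a(1)\in M_q$ with conjugates of the required endpoint type; the extra obstacles are then to make the conjugating unitaries continuous in $t$ (after a preliminary perturbation making the eigenvalue multiplicities of $a(t)$ locally constant) and to patch across the endpoint conditions, which a connectedness argument on the space of admissible averaging tuples allows at the cost of a bounded increase in $m$. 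A cleaner alternative bypasses this entirely: $\cal Q$ and $\cal Z$ are unital, simple, separable, nuclear, monotracial and $\cal Z$-stable (indeed $\cal Q\cong\cal Q\otimes\cal Z$ and $\cal Z\cong\cal Z\otimes\cal Z$), and any \cstar-algebra with these properties is known to have the uniform Dixmier property, $\cal Z$-stability furnishing enough comparison of positive elements to run a single dimension-free averaging argument directly in the algebra. Either route, combined with the reduction above, yields the statement.
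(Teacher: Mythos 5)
Your proposal is right in outline, but the only steps of it that are complete as written are the centering/iteration reduction and the appeals to known results --- and those appeals are exactly what the paper does: its proof of this Fact is purely a citation to Archbold--Robert--Tikuisis, namely \cite[Corollary 3.11]{art} (all unital AF \cstar-algebras with the Dixmier property have the uniform Dixmier property) for $\cal Q$, and \cite[Remark 3.18 and Corollary 3.22]{art} for $\cal Z$. In particular your ``cleaner alternative'' via $\cal Z$-stability is essentially the paper's route (nuclearity and separability are not needed for it), while your two direct arguments are genuinely different in spirit but not finished. (A small inaccuracy: simplicity does not force $z=\tau(a)1$ in the definition, only that $z$ is a scalar within $\gamma\|a\|+$ of $\tau(a)$; but your reduction, which \emph{chooses} $z=\tau(a)1$, is of course sufficient.)

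For $\cal Q$, the crux you leave unproved --- the dimension-free matrix averaging lemma --- is true, and the uniformity-in-$k$ obstacle you worry about dissolves once you invoke the classical Dixmier approximation lemma: for any von Neumann algebra $M$ and self-adjoint $a\in M$ there are a symmetry $u\in M$ and a central element $z$ with $\left\|\tfrac12(a+uau^*)-z\right\|\le\tfrac34\|a\|$, the constant $\tfrac34$ being independent of $M$ and hence of $k$. Iterating this $j$ times in $M_k$ and applying your own trace-centering observation (if $\tau(c)=0$ then the scalar produced has modulus at most $(3/4)^j\|c\|$) gives an average of $2^j$ conjugates of norm at most $2(3/4)^j\|c\|$, so for instance $m=8$ and $\gamma_0=27/32$ work for every $k$ simultaneously; combined with your reduction and the approximation of elements of $\cal Q$ by elements of the matrix subalgebras, this closes the $\cal Q$ branch (and is, in substance, the AF case treated in \cite{art}). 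The genuinely incomplete part is the fibrewise argument for $\cal Z$: choosing the conjugating unitaries continuously in $t$ through eigenvalue crossings, and compatibly with the endpoint fibres $M_p\otimes 1$ and $1\otimes M_q$, is a real selection-and-patching problem that you only gesture at, and it is precisely what the divisibility/$\cal Z$-stability results of \cite{art} are designed to circumvent; for $\cal Z$ you should simply quote them, as the paper does.
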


\begin{proof}
\cite[Corollary 3.11]{art} states that all unital AF \cstar-algebras with the Dixmier property have the uniform Dixmier property, whence $\cal Q$ has the uniform Dixmier property.
\cite[Remark 3.18 and Corollary 3.22]{art} shows that $\cal Z$ has the uniform Dixmier property.
\end{proof}

\begin{cor}
The $\cal Q$EP and $\cal Z$EP have negative solutions.
\end{cor}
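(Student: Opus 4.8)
The plan is to verify that both $\cal Q$ and $\cal Z$ satisfy the hypotheses of the preceding theorem and then invoke it. First I would recall the standard facts that the universal UHF algebra $\cal Q$ and the Jiang--Su algebra $\cal Z$ are each infinite-dimensional, unital, and simple \cstar-algebras. By the Fact just proven above, both algebras have the uniform Dixmier property, so that hypothesis is met as well.

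The only point requiring comment is that $N_{(\cal Q,\tau_{\cal Q})}$ and $N_{(\cal Z,\tau_{\cal Z})}$ each embed into an ultrapower of $\cal R$. Here I would use the theorem of Haagerup and Zsid\'o recalled above: a simple unital \cstar-algebra with the Dixmier property is monotracial, so each of $\cal Q$ and $\cal Z$ carries a unique trace $\tau$, and $N_{(A,\tau)}$ is then a II$_1$ factor. Since $\cal Q$ and $\cal Z$ are separable and nuclear, $N_{(A,\tau)}$ is a separable injective II$_1$ factor, so by Connes' uniqueness theorem $N_{(A,\tau)}\cong \cal R$. In particular, $N_{(A,\tau)}$ embeds (diagonally) into $\cal R^{\u}$, so the final hypothesis of the theorem holds for both $A=\cal Q$ and $A=\cal Z$.

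With all hypotheses checked, applying the preceding theorem to $A=\cal Q$ and to $A=\cal Z$ yields that the $\cal Q$EP and the $\cal Z$EP have negative solutions. I do not expect any real obstacle: the argument is a routine unwinding of definitions together with the cited structural facts (Haagerup--Zsid\'o, Connes' uniqueness theorem, nuclearity of $\cal Q$ and $\cal Z$), and the substantive content has already been isolated in the theorem and the Fact preceding this corollary. If one preferred to avoid invoking Connes' uniqueness theorem explicitly, it would be enough to observe directly that $N_{(\cal Q,\tau_{\cal Q})}$ is the AFD II$_1$ factor $\cal R$ and, likewise, that $N_{(\cal Z,\tau_{\cal Z})}$ is AFD since $\cal Z$ is nuclear and monotracial; either way one obtains an embedding into $\cal R^{\u}$.
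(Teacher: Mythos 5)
Your proposal is correct and follows exactly the route the paper intends: the corollary is an immediate application of the preceding theorem together with the Fact that $\cal Q$ and $\cal Z$ have the uniform Dixmier property, and your verification that $N_{(A,\tau_A)}$ is hyperfinite (via nuclearity and Connes' theorem, or directly) and hence embeds into $\cal R^\u$ just makes explicit what the paper leaves implicit. No issues.
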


\begin{remark}
A specific consequence of the previous corollary is that there is a unital, stably projectionless C*-algebra that does not embed into $\cal Z^\u$.  Here, a C*-algebra $A$ is said to be stably projectionless if, for every $n$, the only projections in $M_n(A)$ are, up to Murray-von Neumann equivalence, 0 or of the form $p \otimes 1$, where $p$ is a projection in $M_n(\mathbb{C})$.  (Note that this is indeed expressible by a set of $\forall\exists$ axioms true of $\cal Z$.)  It would be interesting to see if one could derive this conclusion from the failure of CEP alone using purely operator algebra techniques.
\end{remark}




\section{Tsirelson's problem and Kirchberg's QWEP Conjecture revisited}

As discussed in the introduction, the result MIP*=RE implies a negative solution to Tsirelson's problem.  This conclusion is achieved by applying a semidefinite programming argument coupled with a noncommutative Positivstellensatz result.  In this section, we show how to replace this latter argument with a simple argument using the Completeness Theorem and a result from \cite{quantum}, whose proof is essentially just an application of the Cauchy-Schwarz inequality.

We first remind the reader of the definitions relevant to state Tsirelson's problem.

\begin{defn}
The set $\cqc(n,m)$ of \textbf{quantum commuting correlations} consists of the correlations of the form $p(a,b|x,y)= \langle A^x_a B^y_b\xi,\xi \rangle$ for $x,y \leq n$ and $a,b \leq m$,
where H is a separable Hilbert space, $\xi \in H$ is a unit vector, and for every $x, y \leq n$,
$(A^x_a: a\leq m)$ and $(B^y_b:b\leq m)$ are PVMs on H for which $A^x_a B^y_b=B^y_bA^x_a$.  As before, if $p(a,b|x,x)=0$ whenever $a\not=b$, we call the quantum correlation synchronous and we let $\cqc^s(n,m)$ denote the set of synchronous quantum commuting correlations.  If $\mathfrak G$ is a nonlocal game, we set
$$\pval^{co}(\mathfrak G)=\sup_{p\in C_{qc}^s(n,m)}\sum_{v,w}\mu(v,w)\sum_{i,j}D(v,w,i,j)p(i,j|v,w).$$
\end{defn}

Tsirelson's problem asks whether or not $C_{qa}(n,m)=C_{qc}(n,m)$ for all $n$ and $m$.  Our derivation of a negative solution to Tsirelson's problem from MIP*=RE also establishes a complexity-theoretic fact, which we now state.  (To be fair, this result is also derivable from the aforementioned semidefinite programming/Positivstellensatz argument.)

\begin{defn}
Fix $0<r\leq 1$.  We define $\mip^{co,s}_{0,r}$ to be the set of those languages $L$ (in the sense of complexity theory) for which there is an efficient mapping $z\mapsto \mathfrak G_z$ from strings to nonlocal games such that $z\in L$ if and only if $\pval^{co}(\mathfrak G_z)\geq r$.
\end{defn}

The terminology $\mip^{co}_0$ already exists in the literature and denotes the set of languages for which there exists an efficient mapping $z\mapsto \mathfrak G_z$ as above such that $z\in L$ if and only if $\val^{co}(\mathfrak G_z)=1$, where $\val^{co}$ of a game is defined as $\pval^{co}$ except one takes the supremum over $\cqc(n,m)$ instead of just over $\cqc^s(n,m)$.  Our aim is to prove the following:

\begin{thm}\label{mipcos}
For any $0<r\leq 1$, every language in $\mip^{co,s}_{0,r}$ belongs to the complexity class coRE.
\end{thm}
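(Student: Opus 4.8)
The plan is to show that, given a language $L \in \mip^{co,s}_{0,r}$ witnessed by an efficient map $z \mapsto \mathfrak G_z$, the complement of $L$ is recursively enumerable, i.e., there is an algorithm that halts precisely on those $z$ with $\pval^{co}(\mathfrak G_z) < r$. The key observation is that, by the synchronous analogue of the results quoted earlier in the paper (the equivalence of (1) and (4) in \cite[Theorem 3.6]{KPS}, applied now in the commuting rather than the finite-dimensional setting), the quantity $\pval^{co}(\mathfrak G_z)$ is itself the value of a universal sentence — or, more precisely, a supremum of a quantifier-free formula $\psi_{\mathfrak G_z}$ over the definable set $X_{n,m}$ of tuples of PVMs — but interpreted over \emph{all} tracial von Neumann algebras rather than over $\R$. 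Indeed this is the content of the result from \cite{quantum} alluded to in the introduction to this section: the synchronous quantum commuting value is computed by the universal theory of the class of tracial von Neumann algebras (equivalently, of a locally universal tracial von Neumann algebra $\cal S$).

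Granting that, the argument proceeds exactly as in the proof of Theorem \ref{EP}. First I would fix the recursively axiomatizable theory $T_{\mathrm{vNa}}$ of tracial von Neumann algebras, which is effectively enumerable, and observe that $\vdash$ is therefore effectively enumerable as well. Then, using the reformulation above together with formula $(\dagger)$ (so that $\sup_{x \in X_{n,m}} \psi_{\mathfrak G_z}(x)$ is literally equivalent to an honest universal sentence $\sigma_z$, built effectively from the description of $\mathfrak G_z$), the Completeness Theorem (the Fact in Section 2) gives
\[
\pval^{co}(\mathfrak G_z) \;=\; \sup\{\sigma_z^M : M \models T_{\mathrm{vNa}}\} \;=\; \inf\{r' \in \bb Q^{>0} : T_{\mathrm{vNa}} \vdash \sigma_z \dminus r'\}.
\]
So the algorithm witnessing $L^c \in \mathrm{RE}$ is: on input $z$, compute $\mathfrak G_z$ and hence $\sigma_z$, then dovetail through all proofs from $T_{\mathrm{vNa}}$; accept $z$ as soon as a proof of $\sigma_z \dminus r'$ is found for some rational $r' < r$. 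If $\pval^{co}(\mathfrak G_z) < r$, then by the displayed equation some such $r'$ exists and the proof will eventually be found, so the algorithm halts; conversely, if $\pval^{co}(\mathfrak G_z) \geq r$, then no $\sigma_z \dminus r'$ with $r' < r$ is a consequence of $T_{\mathrm{vNa}}$ (such a consequence would force $\sigma_z^M \le r'$ for all models), so the algorithm runs forever. This is exactly the statement that $L^c$ is r.e., i.e. $L \in \mathrm{coRE}$.

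The main obstacle — and the only place requiring care — is justifying that $\pval^{co}(\mathfrak G_z)$ really is the value over all tracial von Neumann algebras of a universal sentence that can be produced effectively from $\mathfrak G_z$. This is where the cited result from \cite{quantum}, via the Cauchy–Schwarz argument, does the work: one must know that synchronous quantum commuting correlations are exactly those of the form $p(i,j|v,w) = \tau(A^v_i A^w_j)$ for PVMs in some tracial von Neumann algebra with trace $\tau$, and that the supremum over all such representations equals $\pval^{co}(\mathfrak G_z)$ (the finite-dimensionality used for $\R$ in Section 3 is exactly what gets dropped here). One also needs to check that the $\e$-fattening device from Proposition \ref{approx} goes through over an arbitrary model of $T_{\mathrm{vNa}}$ and its ultrapowers — but this was already noted after Proposition \ref{approx}, since $\varphi_{n,m}$ is an almost-near formula with a computable modulus for every tracial von Neumann algebra (again by \cite[Lemma 3.5]{KPS}), so $X_{n,m}$ is uniformly definable across the whole class. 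Once those facts are in place, the effectivity of $z \mapsto \sigma_z$ is routine from the effectivity of $z \mapsto \mathfrak G_z$ and Lemma 2.2, and the complexity-theoretic conclusion follows immediately.
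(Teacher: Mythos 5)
Your argument is correct, but it follows a different route from the paper's. You express $\pval^{co}(\mathfrak G_z)$ as the value of a (near-)universal sentence over the class of tracial von Neumann algebras --- using Fact \ref{vern} plus GNS to pass to tracial vNas, the uniform almost-near property of $\varphi_{n,m}$ (valid in every tracial von Neumann algebra, not just $\R$), and the $(\dagger)$/Proposition \ref{approx} machinery --- and then detect $\pval^{co}(\mathfrak G_z)<r$ by enumerating proofs of $\sigma_z\dminus r'$ from $T_{\mathrm{vNa}}$ for rationals $r'<r$, exactly in the spirit of Theorem \ref{EP} and weak effective enumerability. The paper instead sidesteps the definable-set/perturbation input entirely: it packages the approximate PVM relations together with the threshold ($r\dotminus\psi_{\mathfrak G}$) into the single sentence $\theta_{\mathfrak G,r}$, adds the condition $\theta_{\mathfrak G,r}=0$ to the theory of tracial \cstar-algebras, and detects $z\notin L$ via refutation: by Proposition \ref{vern2} and Lemma \ref{con}, $\pval^{co}(\mathfrak G_z)<r$ iff $T\cup\{\theta_{\mathfrak G_z,r}=0\}\vdash\bot$, which is manifestly r.e. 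Two small points of care in your version: the displayed identity with an exact universal $\sigma_z$ is a slight idealization, since the $\alpha$ appearing in $(\dagger)$ need not be a restricted connective, so you must really work with restricted $\eta$-approximations $\sigma_{z,\eta}$ and accept when $T_{\mathrm{vNa}}\vdash\sigma_{z,\eta}\dminus r'$ with $r'+\eta<r$ (you gesture at this, and it goes through); and you need the 1-Lipschitzness of $\psi_{\mathfrak G}$ to replace quantification over $X_{n,m}$ by the fattened formula, as in Section 3. What each approach buys: the paper's is leaner and needs no definability input, while yours yields slightly more information, namely an effective enumeration of upper bounds on $\pval^{co}(\mathfrak G_z)$ (upper semi-computability of the synchronous commuting value, uniformly in the game), of which membership in coRE for each fixed threshold $r$ is an immediate consequence.
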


In other words, if $L\in \mip^{co,s}_{0,r}$, then there is an algorithm which enumerates the complement of $L$.

The following is \cite[Corollary 5.6]{quantum}
\begin{fact}\label{vern}
The correlation $p(i,j|v,w)$ belongs to $C_{qc}^s(n,k)$ if and only if there is a C*-algebra $A$, a tracial state $\tau$ on $A$, and a generating family of projections $p_{v,i}$ such that $\sum_{i=1}^k p_{v,i}=1$ for each $v=1,\ldots,n$ and such that $p(i,j|v,w)=\tau(p_{v,i}p_{w,j})$.
\end{fact}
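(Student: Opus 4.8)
The plan is to prove the two implications separately, with essentially all of the content in the direction from a synchronous quantum commuting correlation to a tracial state.

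For the routine direction, suppose we are given a C*-algebra $A$, a tracial state $\tau$, and projections $p_{v,i}$ with $\sum_{i=1}^k p_{v,i}=1$ for each $v$ and with $p(i,j|v,w)=\tau(p_{v,i}p_{w,j})$. I would run the GNS construction: form $H=L^2(A,\tau)$ with cyclic unit vector $\xi=\hat 1$, let $A^v_i$ act on $H$ by left multiplication by $p_{v,i}$, and let $B^w_j$ act by right multiplication by $p_{w,j}$. Traciality of $\tau$ is exactly what makes right multiplication by a projection a self-adjoint operator, hence a projection, so $(A^v_i)_i$ and $(B^w_j)_j$ are PVMs summing to the identity; left and right multiplications always commute; and one computes $\langle A^v_iB^w_j\xi,\xi\rangle=\tau(p_{v,i}p_{w,j})=p(i,j|v,w)$. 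Synchronicity is automatic since a family of projections summing to $1$ is mutually orthogonal, so $p(i,j|v,v)=\tau(p_{v,i}p_{v,j})=0$ for $i\neq j$; and replacing $A$ by the separable C*-subalgebra generated by the finitely many $p_{v,i}$ makes $H$ separable.

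For the converse, suppose $p\in C_{qc}^s(n,k)$ is realized on a separable Hilbert space $H$ with unit vector $\xi$ and commuting PVMs $(A^v_i)$, $(B^w_j)$. The key first step is to extract from synchronicity that $A^v_i\xi=B^v_i\xi$ for all $v$ and $i$: since $A^v_i$ and $B^v_i$ are commuting self-adjoint projections, $\|A^v_i\xi-B^v_i\xi\|^2=\langle A^v_i\xi,\xi\rangle+\langle B^v_i\xi,\xi\rangle-2\langle A^v_iB^v_i\xi,\xi\rangle$, and summing over $i$ while using $\sum_i A^v_i=\sum_i B^v_i=1$ together with the synchronicity relations $\langle A^v_iB^v_j\xi,\xi\rangle=p(i,j|v,v)=0$ for $i\neq j$, the right-hand side collapses to $1+1-2=0$. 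Next I would take $A:=C^*(\{A^v_i\})$, the vector state $\tau(x):=\langle x\xi,\xi\rangle$, and $p_{v,i}:=A^v_i$, and verify that $\tau$ is a trace: for any $x\in A$ and any generator $A^v_i$, using that $B^v_i$ commutes with $x$, is self-adjoint, and agrees with $A^v_i$ on $\xi$, one gets $\tau(xA^v_i)=\langle xB^v_i\xi,\xi\rangle=\langle B^v_ix\xi,\xi\rangle=\langle x\xi,A^v_i\xi\rangle=\tau(A^v_ix)$, after which a standard density argument (the set of $y$ with $\tau(xy)=\tau(yx)$ for every $x$ is a norm-closed subspace containing the scalars and stable under right multiplication by the generators) yields traciality. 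Finally, $A^w_j\xi=B^w_j\xi$ gives $\tau(p_{v,i}p_{w,j})=\langle A^v_iA^w_j\xi,\xi\rangle=\langle A^v_iB^w_j\xi,\xi\rangle=p(i,j|v,w)$, and the $p_{v,i}$ generate $A$ by construction.

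I expect the main obstacle to be isolating the synchronicity identity $A^v_i\xi=B^v_i\xi$ in a clean way; after that both the verification that the vector state is tracial and the recovery of the correlation are short, and the separability and generation requirements are dealt with simply by passing to the C*-algebra and Hilbert subspace generated by the finite amount of data at hand.
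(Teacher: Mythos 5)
Your proof is correct, and it is essentially the argument the paper is relying on: the paper does not prove this fact itself but quotes it as \cite[Corollary 5.6]{quantum}, remarking that its proof is ``essentially just an application of the Cauchy--Schwarz inequality,'' which is exactly your norm computation showing $A^v_i\xi=B^v_i\xi$ from synchronicity, followed by the observation that the vector state on $C^*(\{A^v_i\})$ is then tracial; the converse direction via GNS with left and right multiplications is likewise the standard one. No gaps beyond trivia (e.g.\ the inner products $\langle A^v_iB^v_i\xi,\xi\rangle$ are automatically real and nonnegative since commuting projections have positive product, which your computation implicitly uses).
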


Recall the formula $\psi_\mathfrak G(x_{v,i})$ from Section 3.  Let $\theta_{\mathfrak G,r}$ be the sentence 
\[
\inf_{x_{v,i}}\max\left(\max_{v,i}(\|x_{v,i}^2-x_{v,i}\|,\max_{v,i}\|x_{v_i}^*-x_{v,i}\|,\max_v\|\sum_i x_{v,i}-1\|,r\dotminus \psi_\mathfrak G(x_{v,i})\right).
\]

Let $T$ be the theory of tracial C*-algebras as in the previous section.  The following is immediate from Fact \ref{vern}:

\begin{prop}\label{vern2}
For any nonlocal game $\mathfrak G$, we have $\pval^{co}(\mathfrak G)\geq r$ if and only if the theory $T\cup\{\theta_{\mathfrak G,r}=0\}$ is satisfiable.
\end{prop}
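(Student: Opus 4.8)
The plan is to prove both directions by combining Fact~\ref{vern} with an ultraproduct construction; the point in each direction is that the supremum defining $\pval^{co}$ and the infimum defining $\theta_{\mathfrak G,r}$ need not be attained, and the exactness one wants is recovered only after passing to an ultraproduct.

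For the forward implication, suppose $\pval^{co}(\mathfrak G)\ge r$. For each $N\in\bb N$ I would choose $p_N\in C_{qc}^s$ whose value against $\mathfrak G$ exceeds $r-\tfrac1N$, and apply Fact~\ref{vern} to obtain a tracial C*-algebra $(A_N,\tau_N)$ and a generating PVM $(p^N_{v,i})$ with $p_N(i,j|v,w)=\tau_N(p^N_{v,i}p^N_{w,j})$. In the tracial ultraproduct $\prod_{N\to\cU}(A_N,\tau_N)$, which is again a model of $T$, the tuple $x_{v,i}:=(p^N_{v,i})_N$ consists of genuine self-adjoint projections with $\sum_i x_{v,i}=1$ for every $v$, and $\psi_\mathfrak G(x_{v,i})=\lim_{N\to\cU}\psi_\mathfrak G(p^N_{v,i})\ge r$. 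Hence every term in the maximum defining $\theta_{\mathfrak G,r}$ vanishes at this tuple, so $\theta_{\mathfrak G,r}$ takes value $0$ there and the ultraproduct is a model of $T\cup\{\theta_{\mathfrak G,r}=0\}$. (Alternatively one can invoke compactness of continuous logic: the algebras $(A_N,\tau_N)$ already witness that the conditions $\theta_{\mathfrak G,r}\le\tfrac1N$, $N\in\bb N$, are finitely satisfiable over $T$.)

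For the converse, suppose $(A,\tau)\models T\cup\{\theta_{\mathfrak G,r}=0\}$. Unwinding the infimum, for each $N$ there are contractions $x^N_{v,i}\in A$ that are $\tfrac1N$-approximately self-adjoint projections forming a $\tfrac1N$-approximate PVM and with $\psi_\mathfrak G(x^N_{v,i})>r-\tfrac1N$. In the ultrapower $(A^\cU,\tau_\cU)$, still a model of $T$, the elements $P_{v,i}:=(x^N_{v,i})_N$ are exact self-adjoint projections with $\sum_i P_{v,i}=1$, and hence, for fixed $v$, pairwise orthogonal; moreover $\psi_\mathfrak G(P_{v,i})\ge r$. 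Letting $B\subseteq A^\cU$ be the unital C*-subalgebra generated by the $P_{v,i}$, equipped with the restriction of $\tau_\cU$ (a tracial state on $B$), Fact~\ref{vern} applied to $B$, this trace, and the generating PVMs $(P_{v,i})$ shows that $p(i,j|v,w):=\tau_\cU(P_{v,i}P_{w,j})$ lies in $C_{qc}^s$, so
\[
\pval^{co}(\mathfrak G)\ \ge\ \sum_{v,w}\mu(v,w)\sum_{i,j}D(v,w,i,j)\,\tau_\cU(P_{v,i}P_{w,j})\ =\ \psi_\mathfrak G(P_{v,i})\ \ge\ r.
\]

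The only step that needs real care is this converse direction: one must convert the \emph{approximate} PVMs witnessing $\theta_{\mathfrak G,r}^{(A,\tau)}=0$ into \emph{exact} PVMs eligible for Fact~\ref{vern}, and observe that synchronicity of the resulting correlation (the condition $p(i,j|v,v)=0$ for $i\ne j$) is automatic once $\sum_i P_{v,i}=1$ holds with genuine projections. Passing to the ultrapower makes both of these free, at the harmless cost of replacing $(A,\tau)$ by $(A^\cU,\tau_\cU)$, since we only need \emph{some} tracial C*-algebra exhibiting the correlation.
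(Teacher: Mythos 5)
Your argument is correct and follows the paper's intended route: the paper simply declares the proposition immediate from Fact \ref{vern}, and your two applications of that fact (one per direction) are exactly what that means. The ultraproduct/ultrapower bookkeeping you add -- to handle a possibly non-attained supremum and to promote the approximate PVMs witnessing $\theta_{\mathfrak G,r}=0$ to exact ones -- is a legitimate way to fill in the details; alternatively one could invoke compactness of $C_{qc}^s(n,m)$ and a standard perturbation of almost-projections, but nothing essential changes.
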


We will also need the following immediate consequence of the 
Completeness Theorem:

\begin{lem}\label{con}
Let $U$ be a continuous theory.  Then $U$ is satisfiable if and only if $U\not\vdash \bot$\footnote{$\bot$ represents a contradiction i.e. any continuous sentence which cannot evaluate to 0.  For instance, the constant function 1.}.
\end{lem}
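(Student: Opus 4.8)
The plan is to prove Lemma \ref{con} as an essentially immediate corollary of the Completeness Theorem (our Fact) together with the basic properties of the continuous proof system from \cite{BYP}. First I would recall that in continuous logic a theory $U$ is by definition a set of conditions of the form $\sigma = 0$ (equivalently, a set of sentences to be interpreted as evaluating to $0$), and that a model of $U$ is a structure in which every such sentence evaluates to $0$. The statement ``$U \not\vdash \bot$'' means that one cannot derive from $U$ any sentence $\tau$ which is bounded below by a positive constant in every structure; as noted in the footnote, the prototypical such $\tau$ is the constant $1$. So what needs to be shown is precisely: $U$ has a model if and only if $U \not\vdash 1 \dotminus 0$, or some equivalent normalization.

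The key step is to invoke the Completeness Theorem in the form stated as our Fact, applied with $T := U$ and $\sigma := 1$ (the constant sentence). This gives
\[
\sup\{1^M \ : \ M \models U\} = \inf\{r \in \bb Q^{>0} \ : \ U \vdash 1 \dminus r\}.
\]
If $U$ is satisfiable, the left-hand side equals $1$, so the right-hand infimum is $1$; in particular $U \not\vdash 1 \dminus r$ for any $r < 1$, hence $U$ does not prove a contradiction in the sense of the footnote (one cannot prove that a sentence whose infimum over all structures is $1$ takes a value below any $r<1$). Conversely, if $U$ is not satisfiable, then the left-hand side is a supremum over the empty set; under the usual convention (and consistently with how the Completeness Theorem is used elsewhere in this excerpt, e.g. in the proof of Theorem \ref{EP}) this forces the right-hand side down to $0$, so $U \vdash 1 \dminus r$ for every $r > 0$, and in particular $U \vdash 1 \dminus \frac12$, which witnesses $U \vdash \bot$. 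One should phrase this last direction carefully, perhaps by instead observing directly that an unsatisfiable $U$ proves every sentence (soundness would otherwise be violated only in the trivial vacuous sense), so in particular it proves $\bot$.

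I would then clean up the argument so that it does not depend on any convention about empty suprema: rather than reasoning through the displayed identity in the unsatisfiable case, simply note that if $U$ is satisfiable then soundness of $\vdash$ immediately gives $U \not\vdash \bot$ (a model of $U$ would be a model in which the allegedly-derived contradictory sentence evaluates to $0$, impossible); and if $U$ is unsatisfiable, then since the set of models of $U$ is empty, the Completeness Theorem applied to $\sigma = 1$ and any target value yields $U \vdash 1 \dminus r$ for all rational $r>0$, so $U \vdash \bot$. The main (and only) obstacle is a bookkeeping one: making sure the notion of ``$\bot$'' in the footnote is matched precisely to what the proof system of \cite{BYP} proves, and handling the empty-model case in a way that is faithful to the conventions of that proof system. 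Since both directions are one-line consequences of, respectively, soundness and completeness, no real difficulty is expected, and the proof should be only a few sentences long.
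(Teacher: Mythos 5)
Your proof is correct and matches the paper's approach: the paper offers no argument beyond calling the lemma an immediate consequence of the Completeness Theorem, and your two directions (soundness for satisfiable $U$, and the Completeness Theorem applied to the constant sentence $1$ with the empty-supremum convention for unsatisfiable $U$) are exactly the intended filling-in of that remark.
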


We can now prove Theorem \ref{mipcos}.  Let $L$ belong to $\mip^{co,s}_{0,r}$.  Given a string $z$, let $\mathfrak G_z$ be the corresponding game.  If $z\notin L$, then Proposition \ref{vern2} and Lemma \ref{con} imply that $T\cup\{\theta_{\mathfrak G_z,r}=0\} \vdash \bot.$
 Since this latter condition is recursively enumerable, the proof of Theorem \ref{mipcos} is complete.  


One can now deduce the failure of Tsirelson's problem from MIP*=RE as follows.  Suppose, towards a contradiction, that $C_{qa}^s(n,m)=C_{qc}^s(n,m)$ for every $n$ and $m$.  Let $\cal M\mapsto \mathfrak G_\cal M$ be the efficient mapping from Turing machines to nonlocal games described in Theorem \ref{main-MIP*}.  Given a Turing machine $\cal M$, one simultaneously starts computing lower bounds on $\pval^*(\mathfrak G_\cal M)$ while running proofs from $T\cup\{\theta_{\mathfrak G_{\cal M,1}}=0\}$.  Either the first computation eventually yields the fact that $\pval^*(\mathfrak G_{\cal M})>\frac{1}{2}$, in which case $\cal M$ halts, or else the second computation eventually yields the fact that $T\cup\{\theta_{\mathfrak G_{\cal M,1}}=0\}\vdash \bot$,
in which case $\pval^*(\mathfrak G_\cal M)<1$, and $\cal M$ does not halt.  In this way, we can decide the halting problem, a contradiction.  Note that we derived the a priori stronger statement that $\cqa^s(n,m)\not=\cqc^s(n,m)$ for some $n$ and $m$.  

Although somewhat implicit in \cite{KPS}, we can now quickly derive a negative solution to Kirchberg's QWEP conjecture.  Indeed, based on the previous paragraph, it suffices to show that if the QWEP conjecture had a positive solution, then $C_{qa}^s(n,m)=C_{qc}^s(n,m)$ for all $n$ and $m$.  Towards this end, fix $p\in C_{qc}^s(n,m)$ and take a tracial C*-algebra $(A,\tau)$ generated by projections $p_{v,i}$ as in Fact \ref{vern}.  Recall from \cite{KPS} that $C^*(\bb F(n,m))$ is the universal C*-algebra generated by projections as in Fact \ref{vern}.  Letting $e_{v,i}$ denote the corresponding projections in $C^*(\bb F(n,m))$, we fix a surjective *-homomorphism $\pi:C^*(\bb F(n,m))\to A$ sending $e_{v,i}$ to $p_{v,i}$.  Let $\tau'$ be the trace on $C^*(\bb F(n,m))$ defined by $\tau'(a):=\tau(\pi(a))$.  Since $C^*(\bb F(n,m))$ has the local lifting property, if the QWEP conjecture were true, it would also have the weak expectation property, whence $\tau'$ would be an amenable trace.  (See \cite{BO} for all the of the terms and facts described in the previous sentence.)  By the equivalence of (1) and (3) in \cite[Theorem 3.6]{KPS}, it follows that $p\in C_{qa}^s(n,m)$, as desired.

\end{document}